%
\documentclass[a4paper, 11pt, underruledhead]{paper}
\usepackage{amssymb}
\usepackage{amsmath}
\usepackage{amsthm}
\usepackage[mathscr]{eucal}
\usepackage{enumerate}
\usepackage{graphics}
\pagestyle{titleinhead}

\hyphenation{con-fi-gu-ra-tion}


\begin{document}



\let\goth\mathfrak


\newcommand\theoremname{Theorem}
\newcommand\lemmaname{Lemma}
\newcommand\corollaryname{Corollary}
\newcommand\propositionname{Proposition}
\newcommand\factname{Fact}
\newcommand\remarkname{Remark}
\newcommand\examplename{Example}

\newtheorem{thm}{\theoremname}[section]
\newtheorem{lem}[thm]{\lemmaname}
\newtheorem{cor}[thm]{\corollaryname}
\newtheorem{prop}[thm]{\propositionname}
\newtheorem{fact}[thm]{\factname}
\newtheorem{exmx}[thm]{\examplename}
\newenvironment{exm}{\begin{exmx}\normalfont}{\end{exmx}}

\newtheorem{rem}[thm]{\remarkname}

\newtheorem*{thm*}{\theoremname}

\def\myend{{}\hfill{\small$\bigcirc$}}

\newtheorem{reprx}[thm]{Representation}
\newenvironment{repr}{\begin{reprx}\normalfont}{\myend\end{reprx}}
\newtheorem{cnstrx}[thm]{Construction}
\newenvironment{constr}{\begin{cnstrx}\normalfont}{\myend\end{cnstrx}}
\def\classifname{Classification}
\newtheorem{classification}[thm]{\classifname}
\newenvironment{classif}{\begin{classification}\normalfont}{\myend\end{classification}}

\newenvironment{ctext}{%
  \par
  \smallskip
  \centering
}{%
 \par
 \smallskip
 \csname @endpetrue\endcsname
}


\newcounter{sentence}
\def\thesentence{\roman{sentence}}
\def\labelsentence{\upshape(\thesentence)}

\newenvironment{sentences}{%
        \list{\labelsentence}
          {\usecounter{sentence}\def\makelabel##1{\hss\llap{##1}}
            \topsep3pt\leftmargin0pt\itemindent40pt\labelsep8pt}%
  }{%
    \endlist}

\newcounter{typek}\setcounter{typek}{0}
\def\thetypek{\roman{typek}}
\def\typitem#1{\refstepcounter{typek}\item[\normalfont(\thetypek;\hspace{1ex}{#1})\quad]}
\newenvironment{typcription}{\begin{description}\itemsep-2pt\setcounter{typek}{0}}{%
\end{description}}


\newcommand*{\sub}{\raise.5ex\hbox{\ensuremath{\wp}}}
\newcommand{\msub}{\mbox{\large$\goth y$}}    
\def\id{\mathrm{id}}
\def\Aut{\mathrm{Aut}}
\newcommand*{\struct}[1]{{\ensuremath{\langle #1 \rangle}}}

\def\kros(#1,#2){c_{\{ #1,#2 \}}}
\def\skros(#1,#2){{\{ #1,#2 \}}}
\def\inc{\mathrel{\strut\rule{3pt}{0pt}\rule{1pt}{9pt}\rule{3pt}{0pt}\strut}}
\def\lines{{\cal L}}
\def\tran{{\mathscr T}}
\def\collin{\sim}
\def\ncollin{\not\collin}

\def\VerSpace(#1,#2){{\bf V}_{{#2}}({#1})}
\def\GrasSpace(#1,#2){{\bf G}_{{#2}}({#1})}
\def\VeblSpSymb{{\bf P}\mkern-10mu{\bf B}}
\def\VeblSpace(#1){\VeblSpSymb({#1})}
\def\xwpis#1#2{{{\triangleright}\mkern-4mu{{\strut}_{{#1}}^{{#2}}}}}
\def\MultVeblSymb{{\sf M}\mkern-10mu{\sf V}}        
\def\MultVeblSymq{{\sf T}\mkern-6mu{\sf P}}        
\def\xwlep(#1,#2,#3,#4){{\MultVeblSymb^{#1}{\xwpis{#2}{#3}}{#4}}}
\def\xwlepp(#1,#2,#3,#4,#5){{\MultVeblSymb_{#1}^{#2}{\xwpis{#3}{#4}}{#5}}}
\def\xwlepq(#1,#2,#3,#4,#5){{\MultVeblSymq_{#1}^{#2}{\xwpis{#3}{#4}}{#5}}}
\def\konftyp(#1,#2,#3,#4){\left( {#1}_{#2}\, {#3}_{#4} \right)}

\newcount\liczbaa
\newcount\liczbab
\def\binkonf(#1,#2){\liczbaa=#2 \liczbab=#2 \advance\liczbab by -2
\def\doa{\ifnum\liczbaa = 0\relax \else
\ifnum\liczbaa < 0 \the\liczbaa \else +\the\liczbaa\fi\fi}
\def\dob{\ifnum\liczbab = 0\relax \else
\ifnum\liczbab < 0 \the\liczbab \else +\the\liczbab\fi\fi}
\konftyp(\binom{#1\doa}{2},#1\dob,\binom{#1\doa}{3},3) }


\def\Des{{\sf\bfseries DES}}
\def\Desv{{\sf\bfseries DES'}}
\def\Desr{{\sf\bfseries DES''}}
\def\MTC{{\sf STP}}
\def\MVC{{\sf MVC}}


\title[Classification of $\konftyp(15,4,20,3)$-configurations with three $K_5$-graphs]{%
A complete classification of the $\konftyp(15,4,20,3)$-configurations with at least three
$K_5$-graphs}


\author{K. Petelczyc, M. Pra{\.z}mowska, K. Pra{\.z}mowski}

\maketitle

\begin{abstract}
  The class of  $\binkonf(n,+1)$-configurations
  which contain at least $n-2$ $K_n$-graphs coincides with 
  the class of so called systems of triangle perspectives i.e. 
  of configurations which contain a bundle of $n-2$ Pasch configurations with a common line.
  For $n=5$ the class consists of all
  binomial partial Steiner triple systems on $15$ points,
  that contain at least three $K_5$-graphs.
  In this case a complete classification of respective configurations is given and
  their automorphisms are determined.
\end{abstract}

\begin{flushleft}\small
  Mathematics Subject Classification (2010): 05B30 (05C51, 05B40).\\
  Keywords: binomial partial Steiner triple system, complete graph contained in a configuration,
  Veblen (Pasch) configuration, (generalized) Desargues configuration,
  $10_3G$ Kantor's configuration, multiveblen configuration, system of triangle perspectives.
\end{flushleft}

\section*{Introduction}

It is a quite common 
research project to characterize (and classify)
configurations (more generally: block designs) which contain (or not) 
subconfigurations in a definite class
(comp. \cite{bezpasz}, \cite{bezkwadrat}. \cite{bezmitra}).
In the case of our paper these are: partial Steiner triple systems with complete
graphs `inside'.

The minimal size of a partial Steiner triple system i.e. of a 
$(v_r \, b_3)$-configuration which contains a complete graph $K_n$ is
$v = \binom{n+1}{2}$, $b = \binom{n+1}{3}$, $r = n-1$, so such a minimal configuration
${\sf PSTS}$ is a binomial configuration.

Generally, a binomial $\binkonf(n,+1)$-configuration may contain
$0,1,\ldots,n-1,n+1$ graphs $K_n$.
All the (minimal) configurations which  contain $K_4$ were classified in 
\cite{klik:VC}. These all are well known $10_3$-configurations
(comp. \cite{betten}, \cite{obrazki}).

Binomial (minimal) configurations with $K_5$ are
$\konftyp(15,4,20,3)$-configurations, so each of them may contain
$0,1,2,3,4$ or $6$ $K_5$-graphs. In this paper we classify  all these
$\konftyp(15,4,20,3)$-configurations which contain at least three copies
of $K_5$.
We prove  that there are seventeen such configurations
(four of them are so called multi-Veblen configurations \MVC\ with
$4$  or $6$ copies of $K_5$),
we present each of these configurations,
and we determine the automorphism group of each of them.

As a technical tool to achieve  our  result we use the representation
of our $\konftyp(15,4,20,3)$-configurations as so-called systems of 
triangle perspectives \MTC.

In accordance with the general theory every binomial 
$\binkonf(n,+1)$-con\-fi\-gu\-ra\-tion 
with at least $n-2$ copies of $K_n$ can 
be represented as a system of triangle perspectives \MTC\ (comp. \cite{klik:binom}).
The number of \MTC's grows rapidly; 
for $n = 4$ there are three \MTC's, 
two of them are \MVC, 
for $n =5$ there are seventeen \MTC's, 
three of them are (simple) \MVC, 
for $n =6$
there are seven simple \MVC's (cf. \cite{pascvebl}) 
and at least thirty other \MTC's (the classification is not yet completed). 
The problem  to classify all the minimal $\sf PSTS$'s with at least
$n-2$ $K_n$-graphs in each is, theoretically, solved: it 
is equivalent to classification of 
all the edge-colorings
of the complete digraph on $n-2$ vertices 
by the elements of the group $S_3$.
Practically, for $n > 5$ this classification cannot be achieved, we think,
`by hand' and computer methods must be used.


\section{Definitions and the results}


\subsection{Some (basic) combinatorial configurations}
Recall that the term {\em combinatorial configuration} 
(or simply a configuration) is usually (cf. \cite{grop1}, \cite{steiniz}, \cite{levi}) 
used for a finite incidence point-line structure with constant line size and point rank
provided that two different points are 
incident with at most one line.
A {\em $(v_r,b_k)$-configuration} is a combinatorial configuration 
with $v$ points and $b$ lines such that
there are $r$ lines through each point, and there are $k$ points on each line.
A {\em binomial configuration} is a combinatorial $\konftyp(v,r,b,\kappa)$-configuration
such that $v = \binom{r + \kappa - 1}{r}$ and $b = \binom{r + \kappa -1}{\kappa}$.
In what follows we shall be primarily interested in 
{\em binomial partial Steiner triple systems} 
i.e. in the $\binkonf(n,1)$-configurations with arbitrary 
integer $n\geq 3$.

%

Let $k$ be a positive integer and $X$ a set; we write $\sub_k(X)$ for the 
set of all $k$-subsets of $X$.
A multiset with repetitions ({\em a multiset}) of cardinality $k$ with elements in the set
$X$ is a function $f \colon X \longrightarrow N$ such that 
  $|f| := \sum_{x \in X} f(x) = k < \infty$. 
We write $\msub_k(X)$ for the family of 
all such multisets. Clearly, if $f \in \msub_k(X)$ then 
  ${\mathrm{supp}}(f) = \{ x \in X \colon f(x)\neq 0 \}$ 
is finite. 
In particular, if $X$ is finite, we can identify 
$f$ with the (formal) polynomial 
%
   $f = \prod_{x \in {\mathrm{supp}}(f)}  x^{f(x)} = \prod_{x \in X}  x^{f(x)}$ 
%
with variables in $X$; 
we have $|\msub_k(X)| = \binom{|X| + k -1}{k}$.

The incidence structure
\begin{ctext}
  $\GrasSpace(X,k) := \struct{\sub_k(X),\sub_{k+1}(X),\subset}$
\end{ctext}
will be called {\em a combinatorial Grassmannian} (cf. \cite{perspect}).

The ($m$-th) {\em combinatorial Veronesian} over $X$
is the incidence structure
\begin{ctext}
  $\VerSpace(X,m) := \struct{\msub_m(X),{\cal L}^\ast,\in}$,
\end{ctext}
where
  $ {\cal L}^\ast  = 
  \big{\{} e  X^r  
  \colon 1 \leq r \leq m, \;  e \in \msub_{m-r}(X) \big{\}}$
and 
  $e X^r = \{ e x^r \colon x\in X \}$
(cf. \cite{combver}).

\par
Let $X$ be a nonempty set and $|X| = n$;
we write
  $\VerSpace(n,m) = \VerSpace(X,m)$
and
  $\GrasSpace(n,k) = \GrasSpace(X,k)$.
%
Let us quote three classical examples:
  $\GrasSpace(4,2) \cong \VerSpace(3,2)$ is the Veblen (Pasch) configuration,
  $\GrasSpace(5,2)$ is the Desargues configuration \Des, and
  $\VerSpace(3,3)$ is the  ${10}_{3}G$ Kantor configuration
(see \cite{combver}, \cite{kantor}).
The last one is sometimes called the Veronese configuration (\cite{combver}). 

Let $X$ be a non void set. Formally, 
{\em a nondirected loopless graph defined on $X$} is 
a structure of the form $\struct{X,{\cal P}}$ with ${\cal P}\subset \sub_2(X)$ 
but in the sequel we shall
frequently identify it with $\cal P$ and we shall call $\cal P$ simply a graph.
We write $K_X$ and $N_X$ for the complete graph with the vertices $X$ and for 
its complement, respectively.

Finally, let us recall one of the definitions of a multiveblen configuration 
(cf. \cite{pascvebl}).
Let $\cal P$ be a graph defined on $X$, $|X| = n$, and let 
${\goth H} = \struct{\sub_2(X),\lines}$ be a partial Steiner triple system.
The {\em points} of 
{\em the multiveblen configuration 
${\goth M} := \xwlepp({X},{p},{\cal P},{},{\goth H})$} 
are the following:
$p$, $a_i,b_i$ with $i \in X$, $c_z$ with $z \in \sub_2(X)$.
The {\em lines}  of $\goth M$ are the following sets:
\begin{ctext}
  $\{ a_i,b_j,c_{\{ i,j \}} \}$ and
  $\{ a_j,b_i,c_{\{ i,j \}} \}$ for $\{i,j\}\in\sub_2(X)\setminus{\cal P}$;
  \\
  $\{ a_i,a_j,c_{\{ i,j \}} \}$ and
  $\{ b_i,b_j,c_{\{ i,j \}} \}$ for $\{i,j\}\in{\cal P}$;
  \\
  $\{ p,a_i,b_i \}$, $i\in X$;\quad 
  $\{ c_u,c_v,c_w \}$, where $\{ u,v,w \}$ is a line of $\goth H$.
\end{ctext}
A multiveblen configuration is {\em simple} if ${\goth H} = \GrasSpace(X,2)$.
The point $p$ is a {\em center} of $\goth M$.
\begin{fact}
  $\VerSpace(k,m)$ and $\GrasSpace(n,k-1)$ are 
  $\konftyp({\binom{n}{m}},{m},{\binom{n}{k}},{k})$-configurations,
  where ${n = m+k-1}$.

  If $\goth H$ is a $\konftyp({\binom{n}{2}},{n-2},{\binom{n}{3}},{3})$-configuration then
  $\xwlepp({X},{p},{\cal P},{},{\goth H})$ is a 
  $\konftyp(\binom{n+2}{2},{n},\binom{n+2}{3},{3})$-configuration.
 \par
  For every distinct $i,j\in X$ the substructure of \ 
  $\xwlepp({X},{p},{\cal P},{},{\goth H})$
  with the points 
    $p,a_i,a_j,b_i,b_j,c_{\{i,j\}}$ 
  is the Veblen configuration.
 \par
  If $|X| = n$ then 
  $\xwlepp({X},{p},{K_X},{},{\GrasSpace(X,2)}) \cong \GrasSpace(n+2,2)$.
\end{fact}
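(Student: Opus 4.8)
The plan is to establish the four assertions one at a time, each by direct verification from the definitions. For the first assertion I would count incidences directly. In $\GrasSpace(n,k-1)$ the points are the $(k-1)$-subsets and the lines the $k$-subsets of an $n$-set, so there are $\binom{n}{k-1}=\binom{n}{m}$ points and $\binom{n}{k}$ lines (here $n=m+k-1$); each $k$-subset contains exactly $k$ of the $(k-1)$-subsets, and each $(k-1)$-subset extends in $n-(k-1)=m$ ways to a $k$-subset, giving line size $k$ and point rank $m$. For $\VerSpace(k,m)$ with $|X|=k$ the points are the multisets $f$ of cardinality $m$, numbering $\binom{k+m-1}{m}=\binom{n}{m}$; a line $eX^{r}$ has one point $e x^{r}$ for each $x\in X$, and these are pairwise distinct because $r\ge 1$, so the line size is $k$, while the lines through a fixed $f$ are exactly the pairs $(x,r)$ with $x\in\mathrm{supp}(f)$ and $1\le r\le f(x)$, of which there are $\sum_{x}f(x)=m$; the line count $\binom{n}{k}$ then follows from $vr=bk$. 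The only point needing care is the partial-linear-space condition: in the Grassmannian two distinct $(k-1)$-subsets lie on a common line iff they meet in $k-2$ elements, and then their union is the unique such line; in the Veronesian $f$ and $g$ are collinear iff the formal difference $f-g$ is supported on two points with values $+r$ and $-r$, in which case $r$, those two points and $e$ are forced.

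For the second assertion I would first match the parameters. The points split as $p$, the $2n$ points $a_{i},b_{i}$, and the $\binom{n}{2}$ points $c_{z}$, giving $1+2n+\binom{n}{2}=\binom{n+2}{2}$; the lines split as the $2\binom{n}{2}$ lines attached to the $c_{z}$, the $n$ lines $\{p,a_{i},b_{i}\}$, and the $\binom{n}{3}$ lines coming from $\goth H$, giving $\binom{n+2}{3}$. Every listed line has three points, so it remains to check that each point has rank $n$ and that two points determine at most one line. Here $p$ lies on the $n$ lines $\{p,a_{i},b_{i}\}$; each $a_{i}$ lies on $\{p,a_{i},b_{i}\}$ together with exactly one $c$-line for every $j\ne i$ (and symmetrically for $b_{i}$), again $n$; and each $c_{z}$ lies on the two $c$-lines attached to $z$ plus the $n-2$ lines of $\goth H$ through $z$. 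The collinearity analysis is the one genuinely laborious step: I would run through the pairs ($p$ with $a_{i}$, $b_{i}$, $c_{z}$; $a_{i}$ with $a_{j}$, $b_{j}$, $c_{z}$; and $c_{z}$ with $c_{z'}$) and check each carries at most one common line, the final case using that $\goth H$ is itself a partial Steiner triple system.

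For the third assertion I would list the lines of $\goth M$ contained in the six points $p,a_{i},a_{j},b_{i},b_{j},\kros(i,j)$. If $\{i,j\}\notin{\cal P}$ these are $\{p,a_{i},b_{i}\}$, $\{p,a_{j},b_{j}\}$, $\{a_{i},b_{j},\kros(i,j)\}$, $\{a_{j},b_{i},\kros(i,j)\}$; if $\{i,j\}\in{\cal P}$ they are $\{p,a_{i},b_{i}\}$, $\{p,a_{j},b_{j}\}$, $\{a_{i},a_{j},\kros(i,j)\}$, $\{b_{i},b_{j},\kros(i,j)\}$. In either case there are six points and four lines with three points each, every point on two lines; since the four lines pairwise meet in six distinct points, this is the unique $\konftyp(6,2,4,3)$ partial linear space, namely the Veblen configuration $\GrasSpace(4,2)$.

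For the last assertion I would exhibit an explicit isomorphism. Adjoin two new symbols to $X$, writing $Y=X\cup\{\alpha,\beta\}$ so that the points of $\GrasSpace(n+2,2)$ are the $2$-subsets of $Y$, and define $p\mapsto\{\alpha,\beta\}$, $a_{i}\mapsto\{\alpha,i\}$, $b_{i}\mapsto\{\beta,i\}$, $\kros(i,j)\mapsto\{i,j\}$, a bijection of point sets. Since ${\cal P}=K_{X}$ and $\goth H=\GrasSpace(X,2)$, the lines of $\goth M$ are exactly the $\{p,a_{i},b_{i}\}$, the $\{a_{i},a_{j},\kros(i,j)\}$ and $\{b_{i},b_{j},\kros(i,j)\}$, and the triples $\{\kros(i,j),\kros(i,k),\kros(j,k)\}$; under the map these become precisely the $3$-subsets $\{\alpha,\beta,i\}$, $\{\alpha,i,j\}$, $\{\beta,i,j\}$ and $\{i,j,k\}$ of $Y$, which exhaust the lines of $\GrasSpace(n+2,2)$. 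Thus the map is an isomorphism. Conceptually nothing here is deep; the main obstacle is purely the bookkeeping of the partial-linear-space check in the second assertion, the other parts being immediate once the right bijections are written down.
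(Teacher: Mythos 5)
Your proposal is correct, and it matches the paper's treatment: the paper states this Fact without any proof, treating it as routine verification (with the constructions quoted from the cited sources), and your argument — parameter counting plus rank checks for the configuration claims, the explicit four-line listing for the Veblen substructure, and the explicit bijection $p\mapsto\{\alpha,\beta\}$, $a_i\mapsto\{\alpha,i\}$, $b_i\mapsto\{\beta,i\}$, $c_{\{i,j\}}\mapsto\{i,j\}$ for the isomorphism with $\GrasSpace(n+2,2)$ — is exactly the standard verification being implicitly invoked. The only point worth flagging is that in the Veronesian rank count one should note (as you essentially do via the $\pm r$ difference argument) that distinct pairs $(x,r)$ yield distinct lines through a fixed multiset, so the count $\sum_x f(x)=m$ is not an overcount.
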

Note that combinatorial Grassmannians $\GrasSpace(n,2)$, 
combinatorial Veronesians $\VerSpace(3,k)$, 
and multiveblen 
configurations defined above are binomial partial Steiner triples.

We say that a configuration ${\goth M} = \struct{S,\lines}$ 
{\em freely contains} the complete graph $K_X$ if and only if $X \subset S$,
for every edge $e\in\sub_2(X)$ there is a unique block $\overline{e}\in\lines$
that contains $e$, 
and any two edges $e_1,e_2\in\sub_2(X)$ such that 
$\overline{e_1}\cap\overline{e_2} \neq\emptyset$ have a common vertex in $X$.

\subsection{Systems of triangle perspectives}\label{ssec:STP}

In the paper we shall consider also configurations defined by the following 
construction,
which proposes another approach to ``gluing" Veblen configurations, 
more general than the notion of a multiveblen configuration.
\begin{constr}\label{cnstr:MTC}
  Let $I$ be an $n$-element set,
  $\tran :=  \{ a,b,c \}$, and $I \cap \tran = \emptyset$. 
  Moreover, adopt the convention $X \in \{ A,B,C  \}$ ($X=A$ if $x =a$ etc.,
  where $x \in \tran$).
  Let $n$ Veblen configurations ${\mathscr V}^i$ labelled by the elements $i\in I$,
  \begin{ctext}
    ${\mathscr V}^i = 
    \struct{\{ q^a,q^b,q^c,a_i,b_i,c_i \}, \{ L,A_i,B_i,C_i \}}$, $i \in I$,
  \end{ctext}
  have a common line
  $L = \{ q^a,q^b,q^c \}$, let
  $q^x \inc X_i$, $x_i \not\inc X_i$ for $i \in I$, $x \in \tran$.
  Let perspectives%
  \footnote{%
  The term ``perspective" used here may be slightly misleading; $\xi^{i,j}$ is not any
  ``real" perspective of ${\mathscr V}^i$ onto ${\mathscr V}^j$, as the 
  latter should fix the points on $L$, which yields $\xi(i,j)=\id$.
  Therefore, in the sequel we prefer to use the term ``perspective of triangles".
  }
  ${\xi}^{i,j}\colon {\mathscr V}^i \longrightarrow{\mathscr V}^j$
  with  centers $q^{i,j}$  
  be given for distinct $i,j \in I$; 
  then  the triples 
  $\{ q^{i,j},x_i,y_j  \}$ 
  ($x,y \in \tran$) for $y_j = \xi^{i,j}(x_i)$, $i,j \in I$
  are considered as `perspective rays'.  
  In other words, let $\xi$ be a map 
  $\xi\colon I \times I \longrightarrow S_\tran$
  such that $\xi(i,i) = \id$ and $\xi(j,i) = {(\xi(i,j))}^{-1}$
  for $i,j \in I$; then $\xi^{i,j}(x_i) = (\xi(i,j)(x))_j$
  and $q^{i,j}$ are abstract ``new" points such that
  the perspective rays have form
  $\{ q^{i,j},x_i,\xi^{i,j}(x_i) \}$ for all 
  distinct $i,j \in I$ and $x \in \tran$.
  Finally, let ${\goth H} = \struct{\sub_2(I),\lines'_0}$ be a 
  $\konftyp({\binom{n}{2}},{n-2},{\binom{n}{3}},{3})$-configuration
  and let 
  $\lines_0$ be the image of the family $\lines'_0$ under the map
  $\sub_2(I)\ni \{ i,j \}\longmapsto q^{i,j}$.
  The union of the configurations ${\mathscr V}^i$, $i \in I$,
  the points $q^{i,j}$, $\{ i,j \}\in \sub_2(I)$,
  the perspective rays,
  and the lines $\lines_0$ will be denoted by 
  $\xwlepq({I},{},{\xi},{},{\goth H})$
  and will be called {\em a system of triangle perspectives}.
  The line $L$ is called its {\em  basis}.
  \par
  A system of triangle perspectives is {\em simple} if 
  ${\goth H} = \GrasSpace(I,2)$.
\end{constr}
In the sequel in most parts we shall adopt simply $I = \{ 1,\ldots,n \}$
and we shall consider, mainly, simple \MTC's.

  Let $\rho\in S_{\tran}$ be a ``rotation": $\rho(a) = b$, $\rho(b) = c$,
  $\rho(c) = a$.
  Let us write $\sigma_x$ for the map $\sigma_x\in S_\tran$
  such that $\{ x,y,\sigma_x(y) \} = \tran$
  for any  distinct $x,y \in \tran$.
  Evidently, 
    $\xi(i,j) \in \{ \id,\rho,\rho^{-1},\sigma_x\colon x \in \tran \}$
  for every $i,j \in I$.
It is clear that 
  $\xwlepq({I},{},{\xi},{},{\goth H})$
is a $\konftyp({\binom{n+3}{2}},{n+1},{\binom{n+3}{3}},{3})$-configuration,
where $n = |I|$.
Note that if $n = 3$ then $\goth H$ is the line $\GrasSpace(3,2)$
and thus corresponding \MTC's are simple.
%

\subsection{Main results}

Recall a known classifying theorem:
\begin{thm*}[{see \cite{klik:VC}}]
  Let ${\goth M}$ be a $\konftyp(10,3,10,3)$-configuration
  (i.e. let it be a binomial $\binkonf(n,1)$-configuration with $n=4$).
  Assume that $\goth M$ freely contains at least $2$ ($=n-2$) graphs $K_n$.
  Then either $\goth M$ is the Desargues Configuration $\GrasSpace(5,2)$ with 
  $n+1 = 5$ subgraphs $K_4$, or it is the Kantor $10_3G$-configuration with 
  $n-1 = 3$ subgraphs $K_4$, or it is the fez configuration with $n-2$ subgraphs
  $K_4$. 
  No $10_3$-configuration freely contains a $K_5$-graph.
\end{thm*}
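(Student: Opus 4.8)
The plan is to exploit the representation theory recalled above: since $\goth M$ is a binomial $\binkonf(4,1)$-configuration freely containing at least $n-2=2$ copies of $K_n=K_4$, it is isomorphic to a simple system of triangle perspectives $\xwlepq(I,,\xi,,{\goth H})$ built on an index set $I$ with $|I|=2$. Indeed, the general \MTC on $|I|$ Veblen blocks is a $\binkonf(|I|+2,1)$-configuration, so $\binkonf(4,1)$ forces $|I|=2$; then $\sub_2(I)$ is a single point, $\goth H$ is the degenerate configuration on that one point (so the \MTC is automatically simple), and the whole structure is pinned down by the single value $\pi:=\xi(1,2)\in S_\tran$. Writing $I=\{1,2\}$, $L=\{q^a,q^b,q^c\}$, recording the explicit lines of the two Veblen blocks $\mathscr V^1,\mathscr V^2$, and adding the three perspective rays $\{q^{1,2},x_1,(\pi(x))_2\}$ for $x\in\tran$, the classification of $\goth M$ reduces to the classification of $\pi$.

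Next I would cut the six values of $\pi$ down to isomorphism classes. Two moves preserve the type: swapping the labels $1\leftrightarrow2$ of $I$ replaces $\pi$ by $\pi^{-1}$, and relabelling $\tran$ by $\tau\in S_\tran$ replaces $\pi$ by $\tau\pi\tau^{-1}$; together they identify $\pi$ with its whole conjugacy class, leaving exactly three cases: $\pi=\id$, $\pi$ a transposition $\sigma_x$, and $\pi$ a rotation $\rho^{\pm1}$. For each case I would write out the collinearity relation and enumerate the free $K_4$'s. The two block triangles $\{a_1,b_1,c_1,q^{1,2}\}$ and $\{a_2,b_2,c_2,q^{1,2}\}$ are free $K_4$'s for every $\pi$ (since the three rays always join $q^{1,2}$ to all of $a_1,b_1,c_1$, and their second endpoints run through $a_2,b_2,c_2$), giving the guaranteed two copies. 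The remaining copies are the mixed quadruples $\{x_1,x_2,q^y,q^z\}$ with $\{x,y,z\}=\tran$: here $x_1,x_2\sim q^y,q^z$ and $q^y\sim q^z$ hold unconditionally, so such a quadruple is a $K_4$ precisely when the ray through $x_1$ also meets $x_2$, i.e. when $\pi(x)=x$. Counting fixed points of $\pi$ then yields $3$ extra copies for $\pi=\id$ (total $5$), one extra for a transposition (total $3$), and none for a rotation (total $2$); the distinct counts show the three configurations are pairwise non-isomorphic, and matching $5,3,2$ to the known list identifies them as the Desargues configuration $\GrasSpace(5,2)$, the Kantor configuration $\VerSpace(3,3)$, and the fez configuration, respectively.

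For the last assertion I would argue by a point count. If a partial Steiner triple system freely contains $K_5$, then each of the $\binom{5}{2}=10$ edges $e$ lies on a unique block $\overline e$ carrying a third point; the freeness clause forbids this third point from being a vertex (otherwise a disjoint edge's block would meet $\overline e$ at that vertex), and, since two blocks meet in at most one point, two distinct edges cannot share a third point (for adjacent edges a common third point would make their blocks meet twice). Hence the ten third points are distinct non-vertices, forcing at least $5+10=\binom{6}{2}=15$ points, which exceeds the $10$ points of any $10_3$-configuration.

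The main obstacle I anticipate is the bookkeeping of the second paragraph: one must verify that the $K_4$-enumeration in each case is \emph{exhaustive} rather than a mere lower bound, which means checking that no further four-point set is simultaneously pairwise collinear and free, and that the three structures genuinely coincide with the named classical configurations rather than merely sharing their $K_4$-counts (this can be done by exhibiting explicit isomorphisms or by citing \cite{klik:VC}, \cite{obrazki}). If one prefers not to treat the representation theorem as a black box, reproving it for this base case is the other delicate point: one starts from two free $K_4$'s, observes that each already exhausts all ten points as four vertices plus six edge-midpoints, and then analyses how the two vertex-sets and the two six-line pencils can overlap so as to recover $\mathscr V^1,\mathscr V^2$, the common basis $L$, and the perspective rays.
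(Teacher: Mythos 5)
The first thing to note is that the paper does not actually prove this statement: it is quoted as a known theorem with a pointer to \cite{klik:VC}, and the paper's own work begins at the analogous $n=5$ classification. What you have reconstructed blindly is, in effect, the specialization to $n=4$ of the machinery the paper builds for $n=5$, so your route is the ``right'' one relative to this paper. Your reduction of $\goth M$ to a two-triangle perspective with $|I|=2$ is the representation theorem of \cite{klik:binom}, which the paper also treats as a black box; your reduction of $\pi=\xi(1,2)$ to the three classes $\id$, $\sigma_x$, $\rho^{\pm 1}$ via relabelling of $I$ (inversion) and of $\tran$ (conjugation) is exactly Lemma \ref{lem:isoMTC}; the resulting trichotomy \Des, \Desv\ $\cong \VerSpace(3,3)$, \Desr\ (fez) is precisely Representation \ref{repr:10.3inMTC}; and your two ``block'' $K_4$'s are the graphs ${\cal K}_1,{\cal K}_2$ of Lemma \ref{prop:Lunique}. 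Your counting argument ruling out a free $K_5$ is likewise the paper's own reasoning (the minimal-size bound $v\geq\binom{n+1}{2}$ stated in the Introduction), and it is correct as written.

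Two points you defer should be closed, though both are finite checks rather than gaps in the method. First, exhaustiveness of the $K_4$ enumeration: in a freely contained $K_4$ no three vertices are collinear, so the three edges at any vertex occupy all three lines through that vertex; starting the case analysis at a vertex on $L$, at $q^{1,2}$, or at some $x_i$, one finds that every free $K_4$ is either a block quadruple or a mixed quadruple, confirming the exact counts $5,3,2$. Second, ``matching $5,3,2$ to the known list'' is mildly circular as stated, because the counts for the named configurations are part of the theorem being proved; it is cleaner to identify the three structures directly: $\pi=\id$ gives $\GrasSpace(5,2)$ by the classical two-triangles description of Desargues, $\pi=\sigma_x$ gives $\VerSpace(3,3)$ by an explicit relabelling (the $k=3$ analogue of the one written out in Remark \ref{rem:MTC3:ver}), and the fez configuration is, by its very definition in \cite{klik:VC}, the remaining structure with $\pi=\rho$.
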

In our investigations the following theorem, which directly follows from a general theorem 
\cite[Thm. 2.20, Cor. 2.21]{klik:binom},
is crucial
\begin{thm*}
  Let $\goth M$ be a $\konftyp(15,4,20,3)$-configuration
  (i.e. let it be a binomial $\binkonf(n,1)$-configuration with $n=5$).
  Assume that $\goth M$ freely contains at least $3$ ($=n-2$) graphs $K_n$.
  Then either $\goth M$ is the combinatorial Grassmannian $\GrasSpace(6,2)$
  with $6 = n+1$ graphs $K_5$, or it is a simple multiveblen configuration on
  $15$ points, with at least $4=n-1$ graphs $K_5$, or it is a (simple) system of triangle
  perspectives on $15$ points. No such a configuration freely contains a $K_6$-graph.
\end{thm*}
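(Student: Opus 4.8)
The plan is to read the statement as the instance $n=5$ of the general representation and counting results of \cite[Thm.~2.20, Cor.~2.21]{klik:binom}, supplying the parameter bookkeeping and the identification of the two distinguished subfamilies among the resulting systems of triangle perspectives.

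First I would apply the general representation theorem: since $\goth M$ is a $\konftyp(15,4,20,3)$-configuration (a binomial partial Steiner triple system for $n=5$) that freely contains at least $n-2=3$ copies of $K_5$, it is isomorphic to a system of triangle perspectives $\xwlepq({I},{},{\xi},{},{\goth H})$. By Construction~\ref{cnstr:MTC} such a system is a $\konftyp(\binom{|I|+3}{2},|I|+1,\binom{|I|+3}{3},3)$-configuration, and comparing this with $\konftyp(15,4,20,3)$ forces $|I|=3$. Then $\goth H$ is a $\konftyp(3,1,1,3)$-configuration, i.e.\ the single line $\GrasSpace(3,2)$, so by the remark following Construction~\ref{cnstr:MTC} the system is automatically simple. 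This already gives the third alternative; in fact it shows that \emph{every} such $\goth M$ is a simple STP on $15$ points, so the first two alternatives must be read as distinguished subclasses, to be singled out by the number of $K_5$-graphs.

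Next I would count the copies of $K_5$. By hypothesis there are at least $3$, and by the enumeration recalled in the Introduction the only admissible values are $3$, $4$ and $6$. Using \cite[Cor.~2.21]{klik:binom}, or equivalently by a direct analysis of which $5$-cliques the pattern $\xi$ produces among the points $a_i,b_i,c_i,q^{i,j}$ (note that $a_i\collin a_j$ holds exactly when $\xi(i,j)$ fixes $a$, and likewise for $b$ and $c$, while each triple $\{a_i,b_i,c_i\}$ is always a triangle), I would show that the count exceeds the guaranteed $3$ precisely when $\xi$ is regular enough for the gluing to fold into a multiveblen pattern. Concretely, the maximal value $6=n+1$ is attained only by $\xwlepp({X},{p},{K_X},{},{\GrasSpace(X,2)})$ with $|X|=4$, which by the preceding Fact is isomorphic to $\GrasSpace(6,2)$ (its six copies of $K_5$ being the six stars of $\GrasSpace(6,2)$); the value $4=n-1$ singles out the remaining simple multiveblen configurations $\xwlepp({X},{p},{\cal P},{},{\GrasSpace(X,2)})$ with $|X|=4$ and ${\cal P}\neq K_X$. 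This yields the first two alternatives.

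For the last assertion I would argue by cardinality. As recalled at the very beginning, a partial Steiner triple system that freely contains $K_m$ must have at least $\binom{m+1}{2}$ points, since each of its $\binom{m}{2}$ edges forces its own external third point. For $m=6$ this bound is $\binom{7}{2}=21>15$, so the $15$-point configuration $\goth M$ cannot freely contain $K_6$.

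The hard part is the middle step: translating the combinatorics of the $S_{\tran}$-valued pattern $\xi$ on the three pairs of $I=\{1,2,3\}$ into a count of $5$-cliques, and then recognising the multiveblen (respectively Grassmannian) pattern inside the abstract STP --- a recognition that is genuinely a re-coordinatization, since the multiveblen description naturally carries $|X|=4$ while the STP description carries $|I|=3$. This is exactly the information packaged in \cite[Cor.~2.21]{klik:binom}; reproving it by hand reduces to a finite case analysis according to whether consecutive values of $\xi$ compose to the identity, to a rotation, or to a transposition. By contrast the parameter count and the exclusion of $K_6$ are routine.
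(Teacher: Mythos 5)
Your proposal is correct and takes essentially the same route as the paper: the paper gives no separate proof of this theorem, stating that it ``directly follows'' from the general theorem \cite[Thm.~2.20, Cor.~2.21]{klik:binom}, which is exactly the reduction you perform. The supplementary bookkeeping you supply --- forcing $|I|=3$ and simplicity of $\goth H$, counting the $K_5$-graphs via the letters of $\tran$ fixed by every $\xi(i,j)$, and excluding $K_6$ by $\binom{7}{2}=21>15$ --- is consistent with the machinery the paper itself develops or quotes elsewhere (Propositions~\ref{prop:MVC2MTC:gen} and \ref{prop:MTC2MVC:1}, Corollary~\ref{cor:MTC2MVC:no}, and the minimal-size bound stated in the Introduction).
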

On the other hand, the simple multiveblen configurations on $15$ points are all 
known:
\begin{thm*}[{see \cite[Thm. 4]{pascvebl}}]
  Up to an isomorphism, 
  there are exactly three simple multiveblen configurations on $15$ points;
  these are the following:
  \begin{ctext}
    $\xwlepp({J},{p},{K_J},{},{\GrasSpace(J,2)}) \cong \GrasSpace(6,2)$,
    $\xwlepp({J},{p},{N_J},{},{\GrasSpace(J,2)}) \cong \VeblSpace(4)$ 
    (cf. \cite{pascvebl} for a definition of the latter structure),
    and
    $\xwlepp({J},{p},{L_J},{},{\GrasSpace(J,2)})$, 
  \end{ctext}
  where $L_J$ is a linear graph on $J = \{ 1,2,3,4 \}$.
\end{thm*}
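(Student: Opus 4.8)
The plan is to reduce the statement to a finite graph-enumeration problem. By the Fact, a simple multiveblen configuration $\xwlepp({X},{p},{\cal P},{},{\GrasSpace(X,2)})$ has exactly $\binom{n+2}{2}$ points, where $n=|X|$; requiring $15$ points forces $n=4$, so $X=J$ is a $4$-set, the component $\mathscr H=\GrasSpace(J,2)$ is fixed (it is the Veblen configuration), and the entire structure is determined by the single parameter ${\cal P}\subseteq\sub_2(J)$. Thus I would recast the theorem as: decide when two graphs on $J$ produce isomorphic configurations, and count the resulting classes. Since the point set is $\{p\}\cup\{a_i,b_i:i\in J\}\cup\{c_z:z\in\sub_2(J)\}$, of size $1+8+6=15$, everything is explicit.

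Next I would exhibit the relevant isomorphisms. Two families act on the parameter ${\cal P}$. First, each permutation $\pi\in S_J$ lifts to an isomorphism $a_i\mapsto a_{\pi(i)}$, $b_i\mapsto b_{\pi(i)}$, $\kros(i,j)\mapsto \kros(\pi(i),\pi(j))$, $p\mapsto p$; this respects $\GrasSpace(J,2)$ by its $S_J$-symmetry and carries ${\cal P}$ to $\pi({\cal P})$. Second, for a fixed $k\in J$ the ``local swap'' $a_k\leftrightarrow b_k$, fixing every other point, is an isomorphism onto the configuration whose graph is obtained from ${\cal P}$ by toggling the membership of every edge incident with $k$ — that is, by Seidel switching at $\{k\}$. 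Indeed, one checks that a crossing pair $\{a_k,b_l,\kros(k,l)\},\{a_l,b_k,\kros(k,l)\}$ is sent to the parallel pair $\{b_k,b_l,\kros(k,l)\},\{a_l,a_k,\kros(k,l)\}$ and conversely, while each $\{p,a_i,b_i\}$ and each line of $\GrasSpace(J,2)$ is preserved. Hence graphs in a common orbit of the group generated by $S_J$ and all switchings give isomorphic configurations, so the number of isomorphism types is \emph{at most} the number of \emph{two-graphs} (switching classes up to isomorphism) on $4$ vertices.

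Then I would count those. Assigning to ${\cal P}$ the family $\Delta$ of triples $T\in\sub_3(J)$ whose induced subgraph has an odd number of edges gives a switching invariant, and two graphs are switching-equivalent precisely when they share the same $\Delta$. As $|J|=4$, the sole coherence constraint is that the unique $4$-subset carry an even number of members of $\Delta$, forcing $|\Delta|\in\{0,2,4\}$, and $S_J$ acts transitively within each value (for $|\Delta|=2$ a pair of triples corresponds to its pair of excluded points). This yields exactly three classes, with representatives the empty graph $N_J$ ($|\Delta|=0$), the linear graph $L_J$ ($|\Delta|=2$) and the complete graph $K_J$ ($|\Delta|=4$); running through the eleven graphs on four vertices confirms each switches to exactly one of these. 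The extreme cases are then identified via the Fact: $\xwlepp({J},{p},{K_J},{},{\GrasSpace(J,2)})\cong\GrasSpace(6,2)$, and $\xwlepp({J},{p},{N_J},{},{\GrasSpace(J,2)})\cong\VeblSpace(4)$ by the definition of the latter.

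The hard part will be the converse: showing the three representatives are genuinely pairwise non-isomorphic, equivalently that no isomorphism of the abstract configurations can merge distinct two-graphs. Here every point lies on four lines, so the centre $p$ and the partition of the points into $\{p\}$, the base points $\{a_i,b_i\}$ and the Grassmannian points $\{c_z\}$ are invisible to the degree sequence; I would therefore pin them down structurally, using that the $\binom{4}{2}=6$ Veblen subconfigurations on $p,a_i,a_j,b_i,b_j,\kros(i,j)$ all pass through the single point $p$ (the system-of-triangle-perspectives viewpoint), which should canonically recover $p$ and then $\Delta$. A more computational alternative is to separate the three by an isomorphism invariant such as the number and mutual position of the freely contained $K_5$-graphs (the Grassmannian case carrying the maximal six) together with the orders of the automorphism groups. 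I expect this non-isomorphism verification, rather than the counting, to be the main labour.
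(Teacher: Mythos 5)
First, a contextual remark: this theorem is not proved in the paper at all --- it is imported verbatim from \cite[Thm.~4]{pascvebl}, so there is no internal proof to compare against. The closest internal material is exactly the machinery you use: the switching operation $\mu_{i_0}$ together with the appeal to \cite[Prop.~9]{pascvebl} inside the proof of Proposition \ref{prop:MVC2MTC:gen}, and the isomorphism $\xwlepp({X},{p},{K_X},{},{\GrasSpace(X,2)}) \cong \GrasSpace(n+2,2)$ recorded in the Fact. Your first half is sound: $15$ points forces $|J|=4$; the $S_J$-relabellings and the single-vertex swaps $a_k\leftrightarrow b_k$ (the paper's $\mu_k$) are genuine isomorphisms; switchings at singletons generate all Seidel switchings; and the two-graph invariant gives exactly three switching classes on four vertices, with representatives $N_J$, $L_J$, $K_J$. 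This correctly proves that there are \emph{at most} three simple multiveblen configurations on $15$ points.

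There is, however, a genuine gap: the theorem asserts \emph{exactly} three, and the pairwise non-isomorphism of the three representatives is never established --- you explicitly defer it, and both strategies you sketch have real obstructions as stated. The structural one (``canonically recover $p$ and then $\Delta$'') cannot work literally: in $\GrasSpace(6,2)\cong\xwlepp({J},{p},{K_J},{},{\GrasSpace(J,2)})$ the automorphism group $S_6$ acts transitively on the $15$ points, so every point is a center and $p$ is not recoverable; what must actually be proved is that any isomorphism carries \emph{some} center to \emph{some} center and that the graph read off from a center is, up to switching and relabelling, independent of which center is chosen --- and that independence is precisely the nontrivial content of the converse, not a consequence of the setup. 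The computational alternative is also incomplete as stated: the number of freely contained $K_5$-graphs separates $K_J$ (six copies) from the other two, but $N_J$ and $L_J$ both yield exactly four copies (as the paper notes after its main theorem), so this invariant cannot distinguish them; one would still have to compute and compare automorphism groups, or analyse how the four $K_5$'s intersect, and none of this is carried out. Until one of these arguments is completed, your proposal establishes only the upper bound, not the classification.
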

The aim of this paper is to prove the following complete classification of all the
$\konftyp(15,4,20,3)$-configurations which freely contain at least three graphs $K_5$:
\begin{thm*}[main]
  There are exactly seventeen binomial partial Steiner triple systems on 15 points
  that freely contain at least three graphs $K_5$ each.
  These are systems 
  $\xwlepq({I},{},{\xi},{},{\GrasSpace(I,2)})$ with $I = \{ 1,2,3 \}$
  of triangle perspectives determined by the following triples 
  $(\xi(1,2),\xi(2,3),\xi(1,3))$:
  \begin{ctext}
    $(\rho,\rho,\rho)$,
    $(\rho,\rho,\id)$,
    $(\rho,\id,\id)$,
    $(\rho,\rho,\rho^{-1})$,
    $(\rho,\rho^{-1},\id)$,
    $(\sigma_x,\sigma_y,\sigma_z)$,
    $(\sigma_x,\sigma_y,\id)$,
    $(\sigma_x,\rho,\id)$,
    $(\sigma_x,\sigma_y,\rho)$,
    $(\sigma_x,\sigma_y,\rho^{-1})$,
    $(\sigma_x,\sigma_x,\sigma_y)$,
    $(\sigma_x,\sigma_x,\rho)$,
    $(\rho,\rho,\sigma_z)$,
    $(\rho,rho^{-1},\sigma_z)$,
    $(\sigma_x,\sigma_x,\sigma_x)$,
    $(\id,\id,\sigma_x)$, and
    $(\id,\id,\id)$
  \end{ctext}
  ($x,y,z$ are arbitrary, with $\{ x,y,z \} = \{ a,b,c \}$).
\end{thm*}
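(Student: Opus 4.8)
The plan is to reduce the statement to a purely group-theoretic classification of the defining triples, and then to rule out ``hidden'' isomorphisms. By the theorem quoted from \cite{klik:binom}, every $\konftyp(15,4,20,3)$-configuration that freely contains at least three $K_5$-graphs is a simple system of triangle perspectives $\xwlepq({I},{},{\xi},{},{\GrasSpace(I,2)})$ with $I=\{1,2,3\}$, and conversely every such system belongs to the class. Since $\xi$ is determined by the triple $(\xi(1,2),\xi(2,3),\xi(1,3))\in S_\tran^{\,3}$ (the remaining values being fixed by $\xi(i,i)=\id$ and $\xi(j,i)=\xi(i,j)^{-1}$), the class is parametrised, with repetitions, by the $6^3=216$ such triples. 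It therefore suffices to decide when two triples yield isomorphic configurations, and to count the resulting classes.

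First I would isolate the group $G$ of \emph{obvious} equivalences. Relabelling the three triangles by a permutation $\pi$ of $I$ and relabelling the three basis points $q^a,q^b,q^c$ by $\tau\in S_\tran$ (which forces the same relabelling $\tau$ of the vertices $a_i,b_i,c_i$ inside every triangle, because the Veblen lines $A_i,B_i,C_i$ tie the vertices to the points of $L$) carries a configuration to an isomorphic one. Checking that $L$, the lines $A_i,B_i,C_i$, the perspective rays $\{q^{i,j},x_i,(\xi(i,j)(x))_j\}$, and the line $\lines_0$ are preserved shows that the induced action on triples is $\xi'(\pi i,\pi j)=\tau\,\xi(i,j)\,\tau^{-1}$; here one must keep track of the inversions produced on reversed ordered pairs, since a transposition of indices fixing $\{i,j\}$ replaces $\xi(i,j)$ by $\xi(i,j)^{-1}$. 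This gives a faithful action of $G\cong S_I\times S_\tran$ (of order $36$) on the $216$ triples.

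Next I would compute the orbits of $G$. The count is a Burnside summation over the $36$ elements of $G$, and the representatives are found by hand, organising the triples according to their \emph{profile}, i.e.\ how many of the three entries equal $\id$, a rotation $\rho^{\pm1}$, or a transposition $\sigma_x$. Conjugation by $S_\tran$ fixes $\id$, acts on $\{\rho,\rho^{-1}\}$ through the sign, and permutes $\{\sigma_a,\sigma_b,\sigma_c\}$ as $S_\tran$ permutes $\tran$; combining this with the index action (with its inversions) splits each profile into a small number of orbits. For instance the all-rotation profile splits into exactly the two orbits represented by $(\rho,\rho,\rho)$ and $(\rho,\rho,\rho^{-1})$. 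Carrying this out for every profile is expected to produce exactly the seventeen listed representatives.

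The hard part is to prove that these seventeen orbits are \emph{pairwise non-isomorphic} configurations, i.e.\ that no isomorphism outside $G$ merges two of them. The danger is real only when the basis line $L$ fails to be intrinsic: for a configuration that is not a simple multiveblen configuration I would show that $L$ (equivalently, the whole triangle-perspective scaffolding) is recovered from the incidence structure up to $G$, so that every isomorphism is realised inside $G$ and distinct orbits are distinct configurations. The exceptional, highly symmetric cases, namely $(\rho,\rho,\rho)=\GrasSpace(6,2)$ (with six $K_5$-graphs) and the two non-Grassmannian simple multiveblen configurations (with four $K_5$-graphs), are handled by invoking the quoted classification of simple multiveblen configurations on $15$ points together with the number of freely contained $K_5$-graphs: this invariant separates the classes with six, four, and three graphs, and it confirms that the symmetric configurations coincide with no other orbit. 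Once non-isomorphism is established, the automorphism group of each non-exceptional configuration is identified with the stabiliser of its triple in $G$, the exceptional ones having the larger groups dictated by $\GrasSpace(6,2)$ and by the multiveblen structure.
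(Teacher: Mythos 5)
Your strategy is recognizably the same as the paper's (parametrize by triples, factor out the relabelling group $S_I\times S_\tran$ as in \ref{lem:isoMTC}, use intrinsicness of the base line to confine isomorphisms to that group, and quote the multiveblen classification for the symmetric cases), but there is a genuine gap in how you close it: the orbits of your group $G$ on the $216$ triples do \emph{not} biject with isomorphism classes, and their number is $18$, not $17$. The multiset of cycle types of the three entries is a $G$-invariant (conjugation and the inversion forced on reversed pairs both preserve cycle type), so $(\sigma_x,\sigma_x,\id)$ and $(\id,\id,\sigma_x)$ lie in \emph{distinct} orbits; yet they define isomorphic configurations, namely the simple multiveblen configuration $\xwlepp({J},{p},{L_J},{},{\GrasSpace(J,2)})$ with $L_J$ a linear graph (see \ref{rem:MTC3:MVC}). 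The merging isomorphism is not of the form $\beta\times\alpha$ --- it cannot preserve $L$ --- and it exists precisely because this structure contains four $K_5$-graphs, so the hypothesis of \ref{prop:Lunique} (exactly $n-2$ graphs $K_n$), which is what makes $L$ intrinsic, fails. Hence your assertion that the exceptional configurations ``coincide with no other orbit'' is false, and a correctly executed Burnside count would return $18$ classes, contradicting the theorem. What is missing is the exact criterion delimiting the exceptional orbits, i.e. the paper's \ref{prop:MTC2MVC:1} together with \ref{cor:MTC2MVC:no}: $\xwlepq({I},{},{\xi},{},{\GrasSpace(I,2)})$ is a simple multiveblen configuration if and only if all the $\xi(i,j)$ fix a common letter of $\tran$. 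This isolates the four orbits $(\id,\id,\id)$, $(\id,\id,\sigma_x)$, $(\sigma_x,\sigma_x,\id)$, $(\sigma_x,\sigma_x,\sigma_x)$, and then \cite[Thm.~4]{pascvebl} collapses them to exactly three isomorphism classes, giving $14+3=17$.

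A second, concrete error compounds this: you identify $(\rho,\rho,\rho)$ with $\GrasSpace(6,2)$. In fact the Grassmannian is $(\id,\id,\id)$ (the all-edges case $K_{I'}$ of the multiveblen construction), while $(\rho,\rho,\rho)$ is case \eqref{class3:1} of \ref{classif:MTC3}: a configuration that is not a simple multiveblen configuration and contains exactly three $K_5$-graphs. With your identification, the genuinely dangerous orbit $(\id,\id,\id)$ --- the one with six $K_5$-graphs, where $L$ is farthest from being intrinsic --- would be handled by your ``generic'' argument, which presupposes exactly the intrinsicness of $L$ that fails there; conversely $(\rho,\rho,\rho)$ would be exempted from the generic argument for no reason. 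So the partition of the seventeen structures into ``generic'' (exactly three $K_5$'s, isomorphisms confined to $\beta\times\alpha$ by \ref{prop:Lunique} and \ref{lem:autMTC:pom1}) and ``exceptional'' (at least four $K_5$'s, handled by \ref{prop:MTC2MVC:1} and the quoted classification) must be redrawn before your plan can be carried through.
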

The last three triples determine a {\sf PSTS} with (at least) four graphs $K_5$,
and among them the last one contains six graphs $K_5$.
\par\medskip
Some remarks on specific features of the geometry of corresponding configurations
together with characterizations of their automorphism groups are given in the next
section,
where, in several steps we prove our main result.

\section{Reasoning}

Let us adopt the notation of Subsection \ref{ssec:STP}
We write 
$p\collin p'$ when points $p,p'$ are collinear.

\subsection{Relations between multiveblen configurations and systems of triangle
perspectives} \label{sec:prelimin}

%
Systems of triangle perspectives generalize multiveblen configurations:
{\em every simple multiveblen configuration is a simple system of triangle perspectives}.
More precisely, we have the following
\begin{prop}\label{prop:MVC2MTC:gen}
  Let $I = \{ 1,\ldots,n \}$, $p = \{ n+1,n+2 \}$,
  $\cal P$ be a graph defined on the set $I \cup \{ 0 \} =: I'$,
  and ${\goth H}'$ be a 
  $\konftyp({\binom{n+1}{2}},{n-1},{\binom{n+1}{3}},{3})$-configuration
  defined on the point-set $\sub_2(I')$ such that
  $\left\{\{ 0,i \}, \{ 0,j \}, \{ i,j \}\right\}$ is a line of 
  ${\goth H}'$ for every $\{ i,j \}\in \sub_2(I)$.
  Let ${\goth H}$ be the subconfiguration of ${\goth H}'$ with the point
  set $\sub_2(I)$.
  Then 
  there is a map $\xi \colon I \times I \longrightarrow S_\tran$ 
  such that
  %
    $\xwlepp({I'},{p},{\cal P},{},{{\goth H}'}) \cong
    \xwlepq({I},{},{\xi},{},{\goth H})$.
  %
  \par
  Consequently, for every $\cal P$ as above,
  %
    $\xwlepp({I'},{p},{\cal P},{},{\GrasSpace(I',2)}) \cong
    \xwlepq({I},{},{\xi},{},{\GrasSpace(I,2)})$.
\end{prop}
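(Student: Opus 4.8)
The plan is to build an explicit point bijection $\varphi$ from ${\goth M}:=\xwlepp({I'},{p},{\cal P},{},{{\goth H}'})$ onto a system of triangle perspectives on $I$, to read the required map $\xi$ off from $\varphi$, and then to verify line by line that $\varphi$ carries lines to lines; a count of lines finally upgrades $\varphi$ to an isomorphism. I write the points of the target \MTC\ as $q^a,q^b,q^c$ (on the basis $L$), $a_i,b_i,c_i$ and $q^{i,j}$, as in Construction~\ref{cnstr:MTC}, and keep the names $p,a_k,b_k$ ($k\in I'$), $c_z$ ($z\in\sub_2(I')$) for ${\goth M}$. The distinguished element $0\in I'$ is the key: by the Fact, for each $i\in I$ the six points $p,a_0,b_0,a_i,b_i,c_{\{0,i\}}$ span a Veblen configuration of ${\goth M}$, and these $n$ configurations share the line $\{p,a_0,b_0\}$ just as the ${\mathscr V}^i$ share $L$. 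I therefore set $p\mapsto q^c$, $a_0\mapsto q^a$, $b_0\mapsto q^b$ (so that $\{p,a_0,b_0\}\mapsto L$) and $c_{\{i,j\}}\mapsto q^{i,j}$. Fixing $L$ together with the labels of its three points determines each Veblen isomorphism uniquely; reading it off yields $c_{\{0,i\}}\mapsto c_i$ and, for $x\in\{a,b\}$, the point $x_i$ of ${\goth M}$ maps to the point $(\tau_i(x))_i$ of the \MTC, where $\tau_i:=\id$ if $\{0,i\}\notin{\cal P}$ and $\tau_i:=\sigma_c$ if $\{0,i\}\in{\cal P}$ (so $\tau_i$ interchanges $a_i,b_i$ exactly when $\{0,i\}\in{\cal P}$).

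Before defining $\xi$ I would record one structural fact about ${\goth H}'$ that allows the \MTC\ to use only ${\goth H}$. Each point $\{0,i\}$ has rank $n-1$ in ${\goth H}'$, and the $n-1$ hypothesised special lines $\{\{0,i\},\{0,j\},\{i,j\}\}$, $j\in I\setminus\{i\}$, are distinct and all pass through $\{0,i\}$; hence they are exactly the lines of ${\goth H}'$ through $\{0,i\}$. Consequently the lines of ${\goth H}'$ are precisely the special lines together with the lines of ${\goth H}$ (those contained in $\sub_2(I)$), in agreement with $\binom{n}{2}+\binom{n}{3}=\binom{n+1}{3}$.

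Now I put $\xi(i,i):=\id$ and, for distinct $i,j$, $\xi(i,j):=\tau_j\circ\varepsilon_{ij}\circ\tau_i^{-1}$, where $\varepsilon_{ij}:=\sigma_c$ if $\{i,j\}\notin{\cal P}$ and $\varepsilon_{ij}:=\id$ if $\{i,j\}\in{\cal P}$. As $\tau_i$ and $\varepsilon_{ij}=\varepsilon_{ji}$ are involutions, $\xi(j,i)=\xi(i,j)^{-1}$, so $\xi$ is admissible. It then remains to check that $\varphi$ maps every line of ${\goth M}$ to a line of the \MTC: the lines $\{p,a_i,b_i\}$ go to $L$ (for $i=0$) and to the $C_i$ (for $i\in I$); the two lines of ${\goth M}$ on each pair $\{0,i\}$ go to $A_i,B_i$; the two lines on a pair $\{i,j\}\subseteq I$ go to the $a$- and $b$-perspective rays through $q^{i,j}$, which is exactly the defining relation for $\xi$, the alternatives $\{i,j\}\in{\cal P}$ and $\{i,j\}\notin{\cal P}$ matching the shapes $\{a_i,a_j,c_{\{i,j\}}\},\{b_i,b_j,c_{\{i,j\}}\}$ and $\{a_i,b_j,c_{\{i,j\}}\},\{a_j,b_i,c_{\{i,j\}}\}$; each special line goes to $\{q^{i,j},c_i,c_j\}$, the $c$-ray through $q^{i,j}$, which is forced and guaranteed by $\xi(i,j)(c)=c$; and the remaining lines of ${\goth H}'$, namely those of ${\goth H}$, go to $\lines_0$. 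Since $\varphi$ is a point bijection carrying each line onto a line, and both configurations have exactly $\binom{n+3}{3}$ lines, $\varphi$ is an isomorphism.

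I expect the one genuine obstacle to be the bookkeeping at the points $q^{i,j}$: the three perspective rays through $q^{i,j}$ must be matched with the three lines of ${\goth M}$ through $c_{\{i,j\}}$, two of which are the lines on the pair $\{i,j\}$ while the third is a special line of ${\goth H}'$. Recognising that the special lines are precisely what produce the ``$c$-rays'' is the conceptual heart of the argument, and it is the reason the \MTC\ needs only ${\goth H}$ rather than ${\goth H}'$; keeping the two swap conventions $\tau_i$ (on the edges $\{0,i\}$) and $\varepsilon_{ij}$ (on the edges $\{i,j\}$) coherent is the place where errors are easiest. Finally the displayed ``consequently'' statement is the special case ${\goth H}'=\GrasSpace(I',2)$, which satisfies the hypothesis and whose restriction to $\sub_2(I)$ is $\GrasSpace(I,2)$; so the general isomorphism specialises to the asserted $\xwlepp({I'},{p},{\cal P},{},{\GrasSpace(I',2)})\cong\xwlepq({I},{},{\xi},{},{\GrasSpace(I,2)})$.
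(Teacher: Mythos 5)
Your proof is correct. It rests on the same geometric observation as the paper's proof---the line $\{p,a_0,b_0\}$ of $\xwlepp({I'},{p},{\cal P},{},{{\goth H}'})$ becomes the basis $L$, the $n$ Veblen configurations through it become the ${\mathscr V}^i$, and $c_{\{i,j\}}\mapsto q^{i,j}$---but it handles the graph ${\cal P}$ by a genuinely different (and self-contained) mechanism. The paper first \emph{normalizes} ${\cal P}$: using the switching operations $\mu_{i_0}$ and the isomorphism $\xwlepp({I'},{p},{\cal P},{},{{\goth H}'}) \cong \xwlepp({I'},{p},{\mu_{i_0}({\cal P})},{},{{\goth H}'})$ quoted from \cite[Prop.~9]{pascvebl}, it replaces ${\cal P}$ by a graph ${\cal P}'$ containing every edge $\{0,i\}$, after which the relabelling can be taken untwisted ($a_i\mapsto a_i$, $b_i\mapsto b_i$) and $\xi(i,j)\in\{\id,\sigma_c\}$ is read off from ${\cal P}'$ alone. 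You skip the normalization and instead twist the point bijection itself by the involutions $\tau_i$, setting $\xi(i,j)=\tau_j\circ\varepsilon_{ij}\circ\tau_i$; your local swaps $\tau_i$ do precisely the work that the paper delegates to the cited switching result, so your argument does not lean on the external reference. You also make explicit two points the paper compresses into ``It is seen that \ldots'': first, that the rank count in ${\goth H}'$ forces every line of ${\goth H}'$ to be either a special line $\left\{\{0,i\},\{0,j\},\{i,j\}\right\}$ (which becomes a $c$-ray) or a line of ${\goth H}$ (which lands in $\lines_0$)---this is exactly why the target structure needs only ${\goth H}$ rather than ${\goth H}'$; and second, the final count that upgrades a line-preserving point bijection between two $\konftyp(\binom{n+3}{2},{n+1},\binom{n+3}{3},{3})$-configurations to an isomorphism. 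The cost of your route is a longer verification; the gain is self-containedness and an explicit audit of the bookkeeping, and both routes end up with $\xi$ valued in $\{\id,\sigma_c\}$, in agreement with the converse statement \ref{prop:MTC2MVC:1}.
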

\begin{proof}
  In the first step we note that there is a graph ${\cal P}'$ such that
  $\{ 0,i \}\in{\cal P}'$ for every $i \in I$ and
    $\xwlepp({I'},{p},{\cal P},{},{{\goth H}'}) \cong
    \xwlepp({I'},{p},{{\cal P}'},{},{{\goth H}'})$.
  For two graphs ${\cal P}_1$, ${\cal P}_2$ on $I'$ and $i_0\in I'$ we write
  ${\cal P}_2 = \mu_{i_0}({\cal P}_1)$ iff two conditions hold:
  $\skros(i_0,j)\in{\cal P}_2$
  iff $\skros(i_0,j)\notin{\cal P}_1$ for $j\neq i_0$, and 
  $\skros(i,j)\in{\cal P}_2$ iff $\skros(i,j)\in{\cal P}_1$
  for $j,i \neq i_0$. Let 
    $\{ i_1,\ldots,i_s \} = \{  i \in I \colon \{ 0,i \}\notin{\cal P} \}$ 
  and
  let ${\cal P}'$ be the image of $\cal P$ under the composition 
  $\mu_{i_1}\circ\ldots\circ\mu_{i_s}$.
  By \cite[Prop. 9]{pascvebl}, 
      $\xwlepp({I'},{p},{\cal P},{},{{\goth H}'}) \cong
    \xwlepp({I'},{p},{{\cal P}'},{},{{\goth H}'})$.
  Define 
  \begin{ctext}
    $\xi(i,j)(c) = c$ \quad and \quad
    $\xi(i,j)(a,b) = \left\{ \begin{array}{ll}
      (a,b)  & \text{if } \{ i,j \}\in {\cal P}'
      \\
      (b,a)  & \text{if } \{ i,j \}\notin {\cal P}'
    \end{array}\right.$;
  \end{ctext}
  for all $\{ i,j \}\in \sub_2(I)$.
  It is seen that the following relabelling 
  \begin{ctext}
    $q^c \mapsto p$, \ $q^b \mapsto a_0$, \ $q^a \mapsto b_0$, \ 
    $a_i \mapsto a_i$, \ $b_i \mapsto b_i$, \ $c_i \mapsto c_{\{ 0,i \}}$
    \ for $i \in I$, 
    \\
    and \  $q^{i,j} \mapsto c_{\{ i,j \}}$ \ for $\{ i,j \}\in\sub_2(I)$
  \end{ctext}
  establishes an isomorphism of 
    $\xwlepq({I},{},{\xi},{},{\goth H})$
  onto 
    $\xwlepp({I'},{p},{{\cal P}'},{},{{\goth H}'})$.
\end{proof}
Let us make some comments on the idea of the proof of \ref{prop:MVC2MTC:gen}.
Note that, in view of \ref{cnstr:MTC},
the role of the points $q^c,q^b,q^a$ on the line $L$ is symmetric.
Observing the proof we note that the point $q^c$ was chosen 
in $\xwlepq({I},{},{\xi},{},{\goth H})$
so that it appears to be the center of the given 
    $\xwlepp({I'},{p},{{\cal P}'},{},{{\goth H}'})$.
But this center can be chosen arbitrary on $L$ and thus, reversing the reasoning
of that proof we obtain
\begin{prop}\label{prop:MTC2MVC:1}
  Let $I,I',p$ be as in \ref{prop:MVC2MTC:gen};
  let $\goth H$ be a
  $\konftyp({\binom{n}{2}},{n-2},{\binom{n}{3}},{3})$-configuration
  on the pointset $\sub_2(I)$.
  Assume that 
  $\xi\colon I\times I \longrightarrow S_{\tran}$ fixes
  some $x \in \tran$ i.e. $\xi^{i,j}(x) = x$ for all $i,j$.
  Then there is a graph $\cal P$ on $I'$ and 
  a $\konftyp({\binom{n+1}{2}},{n-1},{\binom{n+1}{3}},{3})$-configuration
  ${\goth H}'$ extending $\goth H$ such that
    $ \xwlepq({I},{},{\xi},{},{\goth H}) \cong
      \xwlepp({I'},{p},{\cal P},{},{{\goth H}'})$.
  \par
  In particular, for every such $\xi$ there is $\cal P$ with
    $\xwlepq({I},{},{\xi},{},{\GrasSpace(I,2)}) \cong
     \xwlepp({I'},{p},{\cal P},{},{\GrasSpace(I',2)})$.
\end{prop}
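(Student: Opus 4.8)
The plan is to reverse the construction carried out in the proof of Proposition~\ref{prop:MVC2MTC:gen}. That proof began with a multiveblen datum $(\cal P,{\goth H}')$ and extracted a map $\xi$ together with an explicit relabelling isomorphism; here I start from $\xi$ and must manufacture a suitable graph $\cal P$ on $I'$ and an extension ${\goth H}'$ of $\goth H$. The first move is to normalise the fixed point. By Construction~\ref{cnstr:MTC} the three points $q^a,q^b,q^c$ on the basis $L$ enter the definition of $\xwlepq({I},{},{\xi},{},{\goth H})$ symmetrically. Hence, choosing $\tau\in S_\tran$ with $\tau(x)=c$ and relabelling $x_i\mapsto(\tau(x))_i$, $q^x\mapsto q^{\tau(x)}$ while leaving every $q^{i,j}$ fixed, yields an isomorphism $\xwlepq({I},{},{\xi},{},{\goth H})\cong\xwlepq({I},{},{\xi'},{},{\goth H})$ with $\xi'(i,j)=\tau\circ\xi(i,j)\circ\tau^{-1}$. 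Since $\xi$ fixes $x$, the conjugate $\xi'$ fixes $c$, so without loss of generality I may assume $\xi(i,j)(c)=c$ for all $i,j$. In particular each $\xi(i,j)$, being a permutation of $\tran$ fixing $c$, equals $\id$ or $\sigma_c$.

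Next I would build the two ingredients. Define a graph $\cal P$ on $I'=I\cup\{0\}$ by declaring $\{0,i\}\in{\cal P}$ for every $i\in I$, and, for $\{i,j\}\in\sub_2(I)$, putting $\{i,j\}\in{\cal P}$ iff $\xi(i,j)=\id$ (equivalently $\xi(i,j)(a,b)=(a,b)$); this is well defined precisely because each $\xi(i,j)\in\{\id,\sigma_c\}$. Obtain ${\goth H}'$ from $\goth H$ by adjoining the $n$ points $\{0,i\}$, $i\in I$, together with the $\binom{n}{2}$ lines $\{\{0,i\},\{0,j\},\{i,j\}\}$, $\{i,j\}\in\sub_2(I)$. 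A short count shows each new point lies on $n-1$ lines, each old point gains exactly one line (raising its rank from $n-2$ to $n-1$), and since a new line meets $\sub_2(I)$ in the single point $\{i,j\}$ while two distinct new lines share at most one point, no two points are joined twice; thus ${\goth H}'$ is a $\konftyp({\binom{n+1}{2}},{n-1},{\binom{n+1}{3}},{3})$-configuration extending $\goth H$ in which every $\{\{0,i\},\{0,j\},\{i,j\}\}$ is a line, exactly the hypothesis of \ref{prop:MVC2MTC:gen}.

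Finally I would invoke the relabelling of \ref{prop:MVC2MTC:gen} in reverse, namely $p\mapsto q^c$, $a_0\mapsto q^b$, $b_0\mapsto q^a$, $a_i\mapsto a_i$, $b_i\mapsto b_i$, $c_{\{0,i\}}\mapsto c_i$ for $i\in I$, and $c_{\{i,j\}}\mapsto q^{i,j}$. Checking that this carries the lines of $\xwlepp({I'},{p},{\cal P},{},{{\goth H}'})$ bijectively onto those of $\xwlepq({I},{},{\xi},{},{\goth H})$ is the computation of \ref{prop:MVC2MTC:gen} read backwards: the center lines $\{p,a_i,b_i\}$ match the lines $C_i$ (and $L$ for $i=0$), the ${\goth H}'$-lines on the $c$-points match the $c$-perspective rays $\{q^{i,j},c_i,c_j\}$, and the lines $\{a_i,a_j,c_{\{i,j\}}\}$, $\{b_i,b_j,c_{\{i,j\}}\}$ (respectively $\{a_i,b_j,c_{\{i,j\}}\}$, $\{a_j,b_i,c_{\{i,j\}}\}$) match the $a$- and $b$-rays exactly when $\xi(i,j)=\id$ (respectively $\xi(i,j)=\sigma_c$), which is precisely how $\cal P$ was defined. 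The special case $\goth H=\GrasSpace(I,2)$ forces ${\goth H}'=\GrasSpace(I',2)$, giving the final assertion.

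I expect the only genuinely delicate point to be the symmetry reduction of the first paragraph: one must verify that conjugating $\xi$ by $\tau$ is actually \emph{induced} by an isomorphism of the whole system, i.e. that the relabelled rays $\{q^{i,j},(\tau(x))_i,(\xi'(i,j)(\tau(x)))_j\}$ genuinely coincide with the images of the original perspective rays under $x_i\mapsto(\tau(x))_i$, and likewise that the Veblen lines $A_i,B_i,C_i$ are permuted consistently. Once this is granted, everything else is a bookkeeping reversal of the correspondence already established in \ref{prop:MVC2MTC:gen}, so no new geometric content is required.
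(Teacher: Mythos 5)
Your proposal is correct and takes essentially the same route as the paper: the paper obtains \ref{prop:MTC2MVC:1} precisely by invoking the symmetric roles of $q^a,q^b,q^c$ on $L$ and ``reversing the reasoning'' of the proof of \ref{prop:MVC2MTC:gen}, which is exactly your normalization by conjugation (an instance of what the paper later records as \ref{lem:isoMTC}) followed by the inverse of the relabelling used there. The only difference is one of explicitness: you carry out the details the paper leaves implicit --- the definition of ${\cal P}$ from $\xi(i,j)\in\{\id,\sigma_c\}$, the verification that the extended ${\goth H}'$ is a $\konftyp({\binom{n+1}{2}},{n-1},{\binom{n+1}{3}},{3})$-configuration, and the line-by-line check of the isomorphism --- and all of these are correct.
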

However, there are systems of triangle perspectives
that are not multiveblen configurations  
(see \ref{prop:class:MTC3}, \ref{rem:MTC3:MVC}).

\subsection{Systems of triangle perspectives and their characteristic subconfigurations}
\label{sec:subconf}

We start with determining Desargues and Veronese subconfigurations contained in
a given system of triangle perspectives.
In essence, we shall determine such subconfigurations
with one of their lines being the distinguished base line $L$.
Let us begin with some observations.
\begin{repr}\label{repr:10.3inMTC}
  Let us consider a paricular case $I = \{ 1,2 \}$ and
  ${\goth E} = \xwlepq({I},{},{\xi},{},{\goth Z})$,
  where ${\goth Z} = \GrasSpace(I,2)$ is a trivial structure consisting of one single point.
  Clearly, $\goth E$ is a $\left( {10}_{3} \right)$-configuration.
  Then $\xi = \xi(1,2)\in S_\tran$ is of one of the following three types.
  \begin{typcription}
  \typitem{$\xi = \id$}\label{10.3inMTC:1}
    Then $\goth E$ is the classical Desargues configuration \Des. 
  \typitem{$\xi = \sigma_c$}\label{10.3inMTC:2}
    Then $\goth E$ is the Veronese configuration $\VerSpace(3,3)$.
  \typitem{$\xi = \rho$}\label{10.3inMTC:3}
    Then $\goth E$ is 
    another cousin of the \Des-configuration, visualized in Figure \ref{fig:bolfez}.
    We shall call it the \Desr-configuration
    (it is sometimes called the fez configuration, see \cite{klik:VC}).
\end{typcription}
  Note (comp. \cite{klik:VC}) that
  through \eqref{10.3inMTC:1}--\eqref{10.3inMTC:3} we have shown
  all the possible $({10}_{3})$-configurations that can be presented as a perspective
  of two triangles.
  \par\medskip
  Now, let $I$ be arbitrary and 
  ${\goth N} = \xwlepq({I},{},{\xi},{},{\goth H})$.
  {\em For any distinct $i',i''\in I$ the two Veblen subconfigurations 
  ${\mathscr V}^{i'}$, ${\mathscr V}^{i''}$ of $\goth N$ with the common line $L$
  completed by the point $q^{i',i''}$
  yield a $\left( {10}_{3} \right)$-subconfiguration ${\goth N}_{i',i''}$ and
  ${\goth N}_{i',i''} \cong {\goth E}$, where $\goth E$ is one of the three
  introduced through \eqref{10.3inMTC:1}--\eqref{10.3inMTC:3} above};
  namely: 
  \Des\ for $\xi(i',i'') = \id$, 
  \Desv\ for $\xi(i',i'') = \sigma_x$, $x \in\tran$, and
  \Desr\ for $\xi(i',i'') = \rho,\rho^{-1}$.
  \par
  Finally, comparing the Pasch subconfigurations of $\goth M$ and $\goth E$
  we can justify that,
  conversely, 
  {\em if $\goth E$ is a substructure of $\goth N$ such that $L$ is a line
  of $\goth E$ and $\goth E$ is isomorphic to one of \eqref{10.3inMTC:1}--\eqref{10.3inMTC:3}
  then $\goth E$ is spanned by ${\mathscr V}^{i'},{\mathscr V}^{i''}$
  for some $i',i'' \in I$}.
\end{repr}

\begin{figure}[h!]
 \begin{center}
   \begin{tabular}{cc}
     \begin{minipage}[m]{0.4\textwidth}
       \begin{center}
        \includegraphics{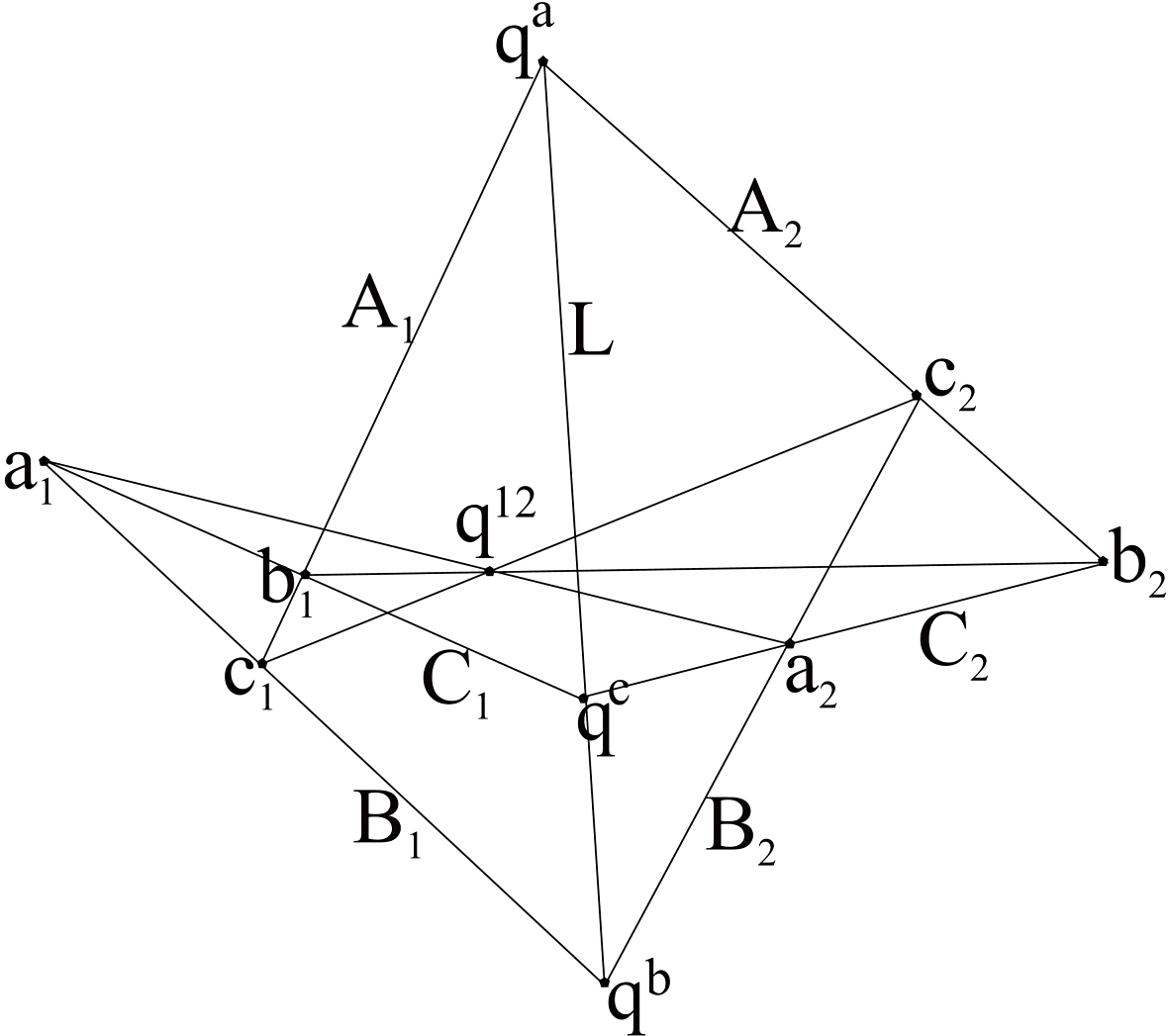}
       \end{center}
       \begin{center}\refstepcounter{figure}\label{fig:boldes}
        Figure \ref{fig:boldes}: The \Des-configuration.
       \end{center}
     \end{minipage}
     &
     \begin{minipage}[m]{0.5\textwidth}
       \begin{center}
        \includegraphics{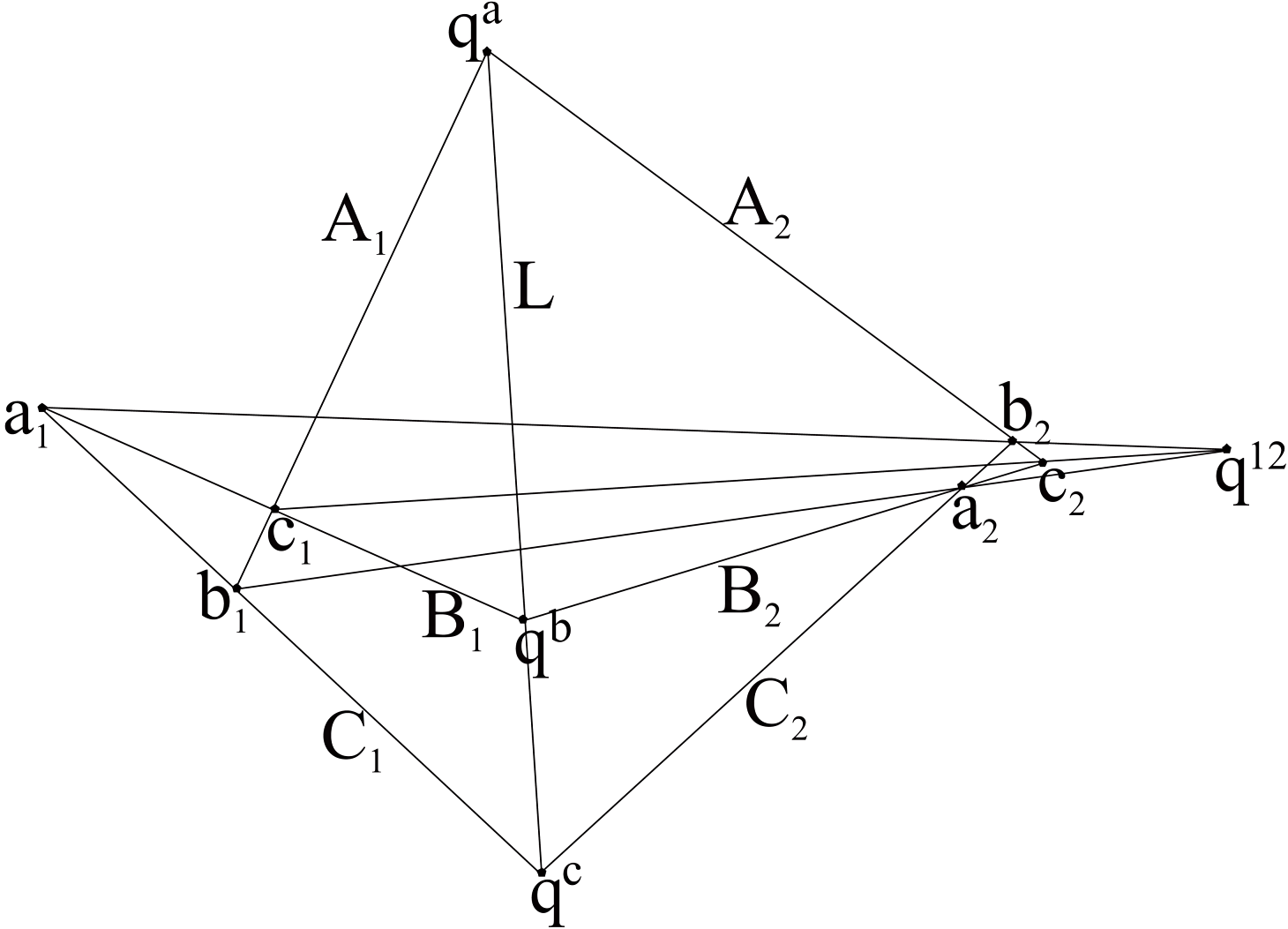}  
       \end{center}
       \begin{center}\refstepcounter{figure}\label{fig:bolkantor}
        Figure \ref{fig:bolkantor}: The \Desv-configuration.
       \end{center}
     \end{minipage}
   \end{tabular}
 \end{center}
\end{figure}

\begin{figure}[h!]
  \begin{center}
  \begin{tabular}{cl}
    \begin{minipage}[m]{0.4\textwidth}
     \begin{center}
       \includegraphics{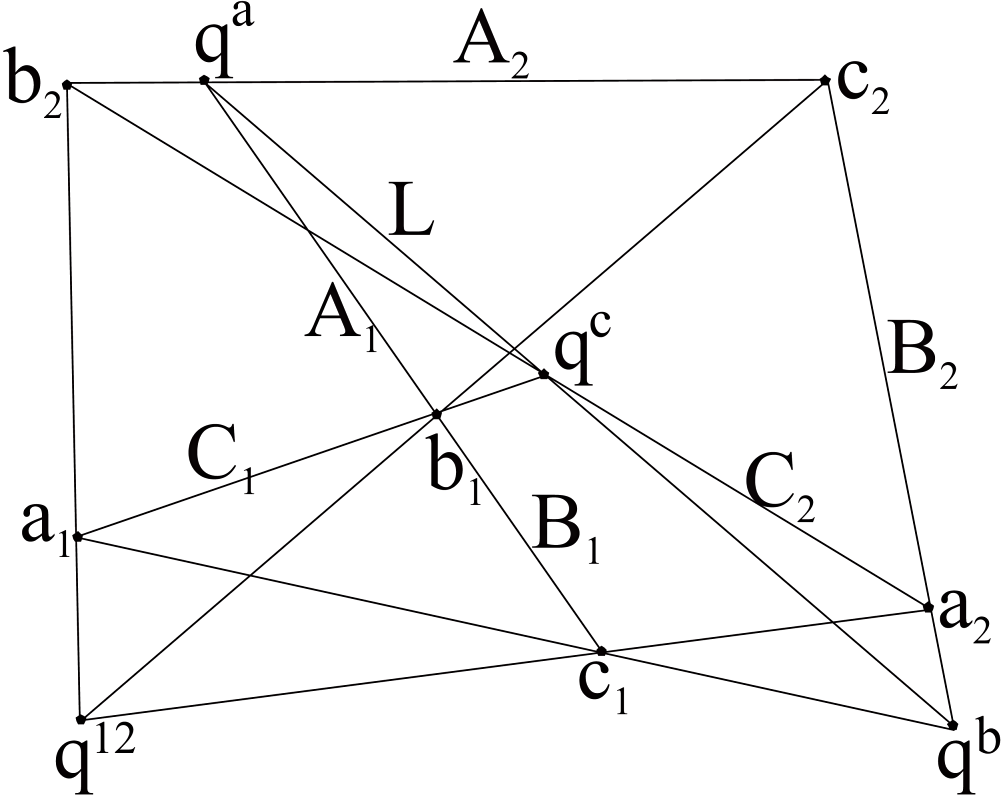}  
     \end{center}
    \end{minipage}
    &
    \begin{minipage}[m]{0.5\textwidth}\refstepcounter{figure}\label{fig:bolfez}
      Figure \ref{fig:bolfez}: The \Desr-configuration.
    \end{minipage}
  \end{tabular}
  \end{center}
\end{figure}

\begin{table}
\begin{center}
\begin{tabular}{cc}
\begin{minipage}[m]{0.4\textwidth}
\begin{center}
  \includegraphics{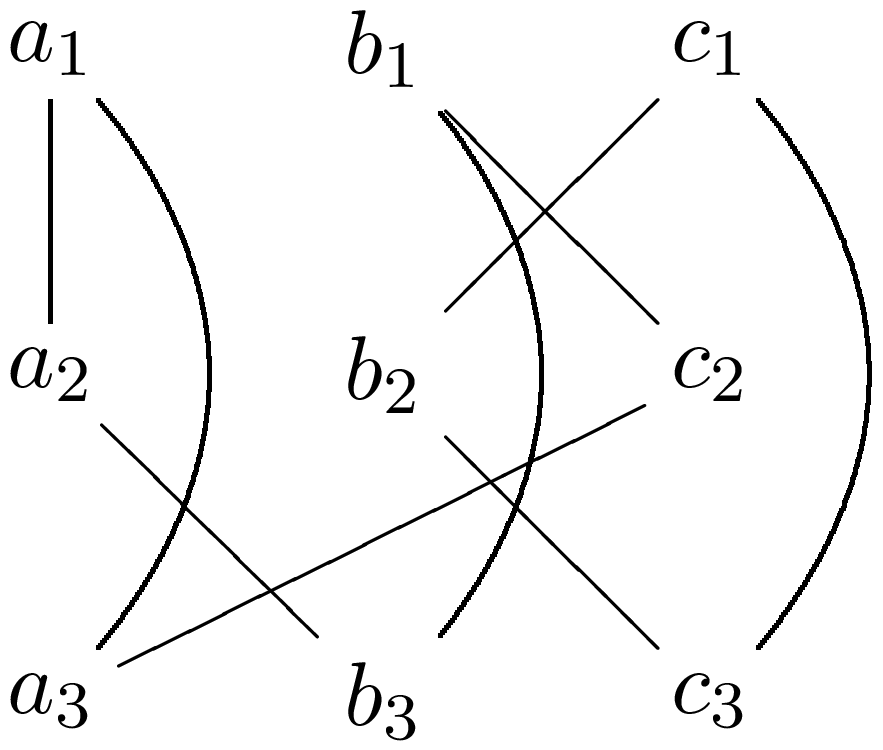}
\end{center}
\end{minipage}
&
\begin{minipage}[m]{0.55\textwidth}
  Points in a row are the points of a triangle of one of the 
  Veblen figures with common line $L$ that are not on $L$;
  points in a column correspond each to other in that they
  are collinear with the same points on $L$.
  Lines indicate that the respective points (from distinct
  triangles) are collinear.
\end{minipage}
\end{tabular}
\end{center}
\caption{The diagram of the line $L$ in the case %
  \eqref{class3:8} of \ref{classif:MTC3}.}
\label{tab:hvl:diagram}
\end{table}

The following is immediate (it can be even stated without
proof, we think), 
though it is quite central in our investigations.
\begin{lem}\label{prop:Lunique} \label{cor:Qunique}
  Let $I = \{ 1,\ldots,m \}$, $\xi\colon I\times I\longrightarrow S_{\tran}$,
  and
  ${\goth M} = \xwlepq({I},{},{\xi},{},{\GrasSpace(I,2)}) = \struct{S,\lines}$
  be an \MTC.
  Then
  \begin{sentences}\itemsep-2pt
  \item
    the set 
	 $${\cal K}_i = \{ a_i,b_i,c_i \} \cup \{ q^{i,j} \colon j \in I\setminus \{ i \} \}$$
	 is a $K_{m+2}$-graph freely contained  in $\goth M$ for each $i\in I$.
  \end{sentences}
  Recall that $\goth M$ is a binomial $\binkonf(m,3)$ partial Steiner triple system,
  so the maximal size of a complete subgraph of $\goth M$ is $n = m+2$.
  \par
  Assume, moreover, that $\goth M$ freely contains exactly $n-2 = m$ graphs $K_n$.
  \begin{sentences}\setcounter{sentence}{1}\itemsep-2pt
  \item
    The family 
	${\mathscr K}= \{ {\cal K}_i\colon i \in I \}$ 
	consists of the complete $K_n$-graphs freely contained in $\goth M$.
  \item
    ${\mathscr Q} := \{ q^{i,j}\colon \{ i,j \}\in\sub_2(I) \}
	= \{ {\cal K}'\cap{\cal K}''\colon  
	{\cal K}',{\cal K}''\in{\mathscr K},\;{\cal K}'\neq{\cal K}''\}$,
	so $\mathscr Q$ remains distinguishable in $\goth M$ as well.
  \item
    $L = \bigcap \{ S\setminus {\cal K}\colon {\cal K}\in{\mathscr K} \}$
    and therefore $L$ is distinguishable in $\goth M$.	
  \item
    Set 
	${\cal D}_x = \{ x_i\colon i \in I \}$ for $x \in \tran$.
	Then 
	${\cal D}_x = \{ a\in S\colon a\not\collin q^x,\; a\notin Q \}$,
	so the family
	${\mathscr D} = \{ {\cal D}_x \colon x \in \tran \}$
	is distinguishable in terms of the structure $\goth M$.
  \end{sentences}
  Consequently, the set $L$ and the families $\mathscr K$, $\mathscr Q$, and 
  $\mathscr D$ remain invariant under the automorphisms of $\goth M$.
\end{lem}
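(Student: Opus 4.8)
The plan is to reduce everything to the explicit combinatorial description of $\goth M = \xwlepq({I},{},{\xi},{},{\GrasSpace(I,2)})$ supplied by Construction \ref{cnstr:MTC}, and then to read off all required incidences by inspection. First I would fix the point set $S = L \cup \{x_i : x\in\tran,\, i\in I\} \cup \{q^{i,j}:\{i,j\}\in\sub_2(I)\}$ and make the lines concrete. Since each ${\mathscr V}^i$ is a Pasch configuration with base $L$, with $q^x\inc X_i$ and $x_i\not\inc X_i$, and since two base points lie only on $L$, the three non-base lines are forced to be $A_i=\{q^a,b_i,c_i\}$, $B_i=\{q^b,a_i,c_i\}$, $C_i=\{q^c,a_i,b_i\}$. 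The perspective rays are $\{q^{i,j},x_i,(\xi(i,j)(x))_j\}$, and because $\goth H=\GrasSpace(I,2)$ the lines of $\lines_0$ are exactly $\{q^{i,j},q^{i,k},q^{j,k}\}$ for $\{i,j,k\}\in\sub_3(I)$. From this I extract the collinearity relations used throughout: $q^x\ncollin x_i$ but $q^x\collin x'_i$ for $x'\neq x$; $q^x\ncollin q^{i,j}$; $x_i\collin q^{j,l}$ iff $i\in\{j,l\}$; $q^{i,j}\collin q^{k,l}$ iff $|\{i,j\}\cap\{k,l\}|=1$; and for $i\neq j$, $x_i\collin x'_j$ iff $\xi(i,j)(x)=x'$.

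For (i) I would verify three things for $\mathcal{K}_i=\{a_i,b_i,c_i\}\cup\{q^{i,j}:j\neq i\}$, which has $3+(m-1)=m+2$ points. Completeness: every pair is collinear --- the pairs $x_i,x'_i$ via $A_i,B_i,C_i$, the pairs $x_i,q^{i,j}$ via perspective rays (as $i\in\{i,j\}$), and the pairs $q^{i,j},q^{i,k}$ via the $\lines_0$-line $\{q^{i,j},q^{i,k},q^{j,k}\}$. Thinness: each such line meets $\mathcal{K}_i$ in exactly its two graph endpoints, the third point being some $q^x$, a $j$-indexed Veblen point, or a $q^{j,k}$ with $i\notin\{j,k\}$; hence no line carries three vertices of $\mathcal{K}_i$. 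Freeness: since $\goth M$ is a partial Steiner triple system, distinct edges of $\mathcal{K}_i$ have distinct closure lines meeting in at most one point, and if two vertex-disjoint edges had meeting closure lines the common point would have to be external to $\mathcal{K}_i$; a short case analysis over the three line-types excludes this. The statement that $K_n$ with $n=m+2$ is largest is recalled from \cite{klik:binom}, not reproved.

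Parts (ii)--(v) are then essentially set-theoretic. Distinctness of the $\mathcal{K}_i$ (e.g. $a_i\in\mathcal{K}_i\setminus\mathcal{K}_{i'}$) yields $m=n-2$ pairwise different freely contained $K_n$-graphs, and as the hypothesis guarantees exactly $n-2$ of them, $\mathscr{K}$ is the complete list, proving (ii). For (iii) I compute $\mathcal{K}_i\cap\mathcal{K}_j=\{q^{i,j}\}$ directly from the index bookkeeping. For (iv), $\bigcup_i\mathcal{K}_i=\{x_i\}\cup\{q^{i,j}\}=S\setminus L$, so $\bigcap_i(S\setminus\mathcal{K}_i)=L$. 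For (v) I apply the collinearity relations with a base point: the points not collinear with $q^x$ are precisely $\mathcal{D}_x\cup\mathscr{Q}$ (adopting the reflexive convention $q^x\collin q^x$, so $q^x$ itself is not swept in), whence deleting $\mathscr{Q}$ leaves $\mathcal{D}_x$. Invariance is then formal: ``freely contained $K_n$-graph'' is a structural notion, so $\mathscr{K}$ is $\Aut(\goth M)$-invariant, and $\mathscr{Q},L,\mathscr{D}$ are built from $\mathscr{K}$ (and from $L,\mathscr{Q}$ in the case of $\mathscr{D}$) by automorphism-stable operations; here only the \emph{family} $\mathscr{D}$ is invariant, since an automorphism may permute $L$ and hence the individual $\mathcal{D}_x$.

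I expect the only genuinely non-formal point to be the freeness check in (i): ruling out that two closure lines of vertex-disjoint edges of $\mathcal{K}_i$ meet at a point outside $\mathcal{K}_i$. Everything else is either an immediate consequence of the explicit construction or pure counting and set algebra --- consistent with the authors' view that the statement is ``immediate''. A secondary point worth stating explicitly is the reflexivity convention for $\collin$ in (v), without which $q^x$ would be wrongly included in $\mathcal{D}_x$, together with the remark in the invariance step that the individual sets $\mathcal{D}_x$ need not be fixed, only the family.
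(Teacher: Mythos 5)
Your proposal is correct, and it is essentially the paper's own approach in the only sense available: the authors state this lemma \emph{without} proof (calling it ``immediate''), and your direct verification from Construction \ref{cnstr:MTC} --- forcing the lines $A_i=\{q^a,b_i,c_i\}$, $B_i=\{q^b,a_i,c_i\}$, $C_i=\{q^c,a_i,b_i\}$, then checking completeness, thinness, and the disjoint-edge/third-point case analysis for freeness, followed by the set-theoretic identities for (ii)--(v) --- is precisely the routine check they left to the reader. Your two flagged subtleties (the reflexivity convention for $\collin$ in (v), without which $q^x$ would spuriously enter ${\cal D}_x$, and the fact that only the family $\mathscr D$, not the individual sets ${\cal D}_x$, is $\Aut(\goth M)$-invariant) are genuine points of precision that the paper's terse statement glosses over.
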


\begin{cor}\label{cor:MTC2MVC:no}
  Assume that
  for every $x \in \tran$ there is $\{ i,j \}\in \sub_2(I)$ such that
  $\xi(i,j)(x) \neq x$ (cf. \ref{prop:MTC2MVC:1}).
  Then the structure 
  $\xwlepq({I},{},{\xi},{},{\GrasSpace(I,2)})$
  is not a simple multiveblen configuration.
\end{cor}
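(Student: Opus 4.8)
The plan is to pin down \emph{exactly} how many graphs $K_5$ the system ${\goth M}=\xwlepq({I},{},{\xi},{},{\GrasSpace(I,2)})$ carries (here $I=\{1,2,3\}$, so $m=|I|=3$ and $n=m+2=5$), and then to invoke the classifying theorem quoted above: a simple multiveblen configuration on $15$ points freely contains at least $4=n-1$ graphs $K_5$. Thus it suffices to show that, under the hypothesis, ${\goth M}$ freely contains only the three ``obvious'' graphs ${\cal K}_1,{\cal K}_2,{\cal K}_3$ of \ref{prop:Lunique}; being strictly below the threshold $4$, it cannot then be a simple multiveblen configuration.

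First I would record, straight from \ref{cnstr:MTC}, the collinearities of ${\goth M}$: $q^x\collin x'_i$ iff $x\neq x'$ (so $q^x\ncollin x_i$); for $i\neq j$, $x_i\collin x'_j$ iff $\xi(i,j)(x)=x'$; each $q^{i,j}$ is collinear with every vertex of the two triangles indexed by $i$ and $j$ and with the remaining points $q^{i,l},q^{j,l}$, whereas $q^x\ncollin q^{i,j}$; and the only lines among the distinguished points are $\{q^a,q^b,q^c\}$ and $\{q^{1,2},q^{1,3},q^{2,3}\}$. The feature that drives everything is that cross-triangle collinearity is \emph{functional}: each vertex of triangle $i$ is collinear with exactly one vertex of triangle $j$. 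Since a freely contained $K_5$ is in particular a $5$-clique of the collinearity relation, it is enough to enumerate these cliques. A clique cannot contain both some $q^x$ and some $q^{i,j}$. If it meets $L=\{q^a,q^b,q^c\}$ in a single point $q^x$, the remaining vertices lie among $\{y_i: y\neq x\}$, and functionality caps any pairwise-collinear family of these at three vertices (at most four points in all), so no $K_5$ arises. If it meets $L$ in two points, say $q^y,q^z$ with $\{x,y,z\}=\tran$, the only admissible further vertices are $x_1,x_2,x_3$, and these are pairwise collinear precisely when $\xi(i,j)(x)=x$ for all $i,j$, i.e. precisely when $x$ is fixed by $\xi$. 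If it contains some $q^{i,j}$ but no point of $L$, then two of $q^{1,2},q^{1,3},q^{2,3}$ must occur, they share an index $i$, and the clique is forced to be ${\cal K}_i$. Finally, by functionality again, no clique lying entirely among triangle vertices reaches size $5$. Hence every freely contained $K_5$ is a $5$-clique of one of two shapes: one of ${\cal K}_1,{\cal K}_2,{\cal K}_3$, or a ``directional'' clique $\{q^y,q^z,x_1,x_2,x_3\}$, the latter occurring only for $x\in\tran$ fixed by $\xi$.

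Under the hypothesis no $x\in\tran$ is fixed, so the only $5$-cliques are ${\cal K}_1,{\cal K}_2,{\cal K}_3$; thus ${\goth M}$ freely contains exactly three graphs $K_5$ (these three, by \ref{prop:Lunique}), strictly below $4$, and by the classifying theorem ${\goth M}$ is therefore not a simple multiveblen configuration. (Consistently with \ref{prop:MTC2MVC:1}, a fixed element would make a further, directional, $5$-clique available.) The main obstacle is making the clique enumeration airtight — in particular turning the ``functional'' exclusion into a clean statement and checking that no unexpected $5$-clique is hidden in a mixture of triangle vertices with the points $q^{i,j}$; once this bookkeeping is settled the count, and with it the corollary, follows at once.
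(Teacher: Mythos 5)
Your argument is correct for the case it treats, and it is a genuinely different route from the paper's. The paper works top-down and uniformly in $|I|$: supposing a further $K_n$-graph $Y$ exists (which a simple multiveblen configuration would have to supply), it cites structural observations from \cite{klik:binom} --- $Y$ meets each ${\cal K}_i$ in exactly one point $u_i\in\{a_i,b_i,c_i\}$, the two remaining points of $Y$ lie on $L$, and $Y\cap{\cal K}_i$, $Y\cap{\cal K}_j$, ${\cal K}_i\cap{\cal K}_j$ are collinear --- to force $u_i=z_i$ for a single $z\in\tran$ and hence $\xi(i,j)(z)=z$ for all $i,j$, against the hypothesis. You instead enumerate from scratch all $5$-cliques of the collinearity graph of $\xwlepq({I},{},{\xi},{},{\GrasSpace(I,2)})$ with $|I|=3$; your functionality bounds hold in every case, and the census (the three ${\cal K}_i$, plus a directional clique $\{q^y,q^z,x_1,x_2,x_3\}$ exactly when $\xi$ fixes $x$) is correct. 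What your route buys is self-containedness: beyond \ref{prop:Lunique} and the incidences of \ref{cnstr:MTC} you need nothing from \cite{klik:binom}. What the paper's route buys is brevity and generality in $|I|$.

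Two caveats you should address. First, the corollary is stated for arbitrary $I$ and the paper's proof works at that level, whereas both your census and your threshold (``a simple multiveblen configuration on $15$ points freely contains at least $4$ graphs $K_5$'') are pinned to $|I|=3$; the enumeration generalizes, but your text does not carry it out. Second, the threshold is not literally delivered by the quoted classification theorem: its three classes overlap (by \ref{prop:MVC2MTC:gen} every simple multiveblen configuration on $15$ points is itself a system of triangle perspectives), so the disjunction alone does not assert that a simple multiveblen configuration must have at least four $K_5$'s. This is easy to repair inside your own framework: by \ref{prop:MVC2MTC:gen} such a configuration is isomorphic to some $\xwlepq({I},{},{\zeta},{},{\GrasSpace(I,2)})$ with $\zeta(i,j)(c)=c$ for all $i,j$, and your two-points-on-$L$ case then exhibits a fourth $5$-clique $\{q^a,q^b,c_1,c_2,c_3\}$ there; since clique counts are isomorphism invariants and under your hypothesis there are only three $5$-cliques, the contradiction follows with no appeal to the threshold at all. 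Note that the paper's own proof makes the same tacit assumption (that a simple multiveblen configuration carries ``one more'' $K_5$), so this is a point to make explicit rather than an error peculiar to your proposal.
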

\begin{proof}
  Suppose that 
  ${\goth N} := \xwlepq({I},{},{\xi},{},{\GrasSpace(I,2)})$
  contains one more complete $K_n$-graph $Y$. Here, we use some observations of
  \cite{klik:binom}.
  For each $i\in I$ there is a point $u_i$ such that
  $Y \cap {\cal K}_i = \{ u_i \}$; then $u_i\in\{a_i,b_i,c_i\}$.
  We have $|Y\setminus\{u_i\colon i\in I\}| = 2$, so there are two points $x',y'$ such 
  that $Y \setminus \{ u_i\colon i \in I \} = \{ x',y' \}$, and $x',y'\in L$.
  There are $x,y\in\tran$ such that $\{ x',y' \} = \{ q^x,q^y \}$.
  Take $z\in \tran$ such that 
  $\{ x, y, z \} =\tran$; since $Y$ is a complete graph we have $u_i\collin q^x,q^y$
  and therefore $u_i=z_i$ for every $i\in I$.
  On the other hand the points 
  $Y\cap {\cal K}_i$, $Y\cap{\cal K}_j$, ${\cal K}_i\cap{\cal K}_j$
  are on a line for any two distinct $i,j\in I$. This means that 
  $\{z_i,z_j,q^{i,j}\}$ is a line of $\goth N$ i.e. $\xi(i,j)(z)=z$ for all $i,j$.
  This contradicts the assumptions.
\end{proof}

One more evident observation will be also useful
\begin{lem}\label{lem:isoMTC}
  Let $\alpha\in S_I$, $\beta\in S_\tran$, and $\xi$ be a matrix defining
  a system of triangle perspectives
  $\xwlepq({I},{},{\xi},{},{\goth H})$.
  Let $\goth K$ be the image of $\goth H$ under the map
  $\{ i,j\} \mapsto \{ \alpha(i),\alpha(j) \}$
  defined on $\sub_2(I)$; finally, let a matrix $\zeta$ be defined by
  \begin{ctext}
    $\zeta(i,j) = \beta \circ \xi(\alpha^{-1}(i),\alpha^{-1}(j)) \circ \beta^{-1}$,
    for all $i,j \in I$.
  \end{ctext}
  Then the following map $F$,
  \begin{ctext}
    $F(x_i) = (\beta(x))_{\alpha(i)}$, \ 
    $F(q^x) = q^{\beta(x)}$, \ 
    $F(q^{i,j}) = q^{\alpha(i),\alpha(j)}$, \ 
    $x \in \tran, \; i,j \in I$
  \end{ctext}
  is an isomorphism of 
  $\xwlepq({I},{},{\xi},{},{\goth H})$
  onto
  $\xwlepq({I},{},{\zeta},{},{\goth K})$.
  The map $F$ will be denoted by $\beta\times\alpha$.
\end{lem}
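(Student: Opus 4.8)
The plan is to verify that the explicitly given map $F=\beta\times\alpha$ is a point-bijection carrying every line of $\xwlepq({I},{},{\xi},{},{\goth H})$ onto a line of $\xwlepq({I},{},{\zeta},{},{\goth K})$. Since both structures are $\konftyp({\binom{n+3}{2}},{n+1},{\binom{n+3}{3}},{3})$-configurations with $n=|I|$, this already suffices: a point-bijection sending lines to lines sends the whole line family injectively into a line family of the same finite cardinality, hence bijectively, so $F^{-1}$ preserves lines as well and $F$ is an isomorphism.

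First I would dispose of two preliminaries. The map $F$ is a bijection because it acts bijectively on each of the three disjoint classes of points: $x_i\mapsto(\beta(x))_{\alpha(i)}$ is a bijection of the triangle points since $\beta\in S_{\tran}$ and $\alpha\in S_I$, $q^x\mapsto q^{\beta(x)}$ permutes the basis $L$, and $\{i,j\}\mapsto\{\alpha(i),\alpha(j)\}$ is a bijection of $\sub_2(I)$, so $q^{i,j}\mapsto q^{\alpha(i),\alpha(j)}$ is a bijection of the centers. Also, $\zeta$ is an admissible matrix: $\zeta(i,i)=\beta\circ\id\circ\beta^{-1}=\id$, and from $\xi(j',i')=(\xi(i',j'))^{-1}$ one gets $\zeta(j,i)=(\zeta(i,j))^{-1}$; hence $\xwlepq({I},{},{\zeta},{},{\goth K})$ is well defined.

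Next I would run through the four kinds of lines. The basis $L=\{q^a,q^b,q^c\}$ maps to $\{q^{\beta(a)},q^{\beta(b)},q^{\beta(c)}\}=L$. The Veblen line of ${\mathscr V}^i$ through $q^x$ has the shape $\{q^x\}\cup\{w_i\colon w\in\tran\setminus\{x\}\}$; its image $\{q^{\beta(x)}\}\cup\{(\beta(w))_{\alpha(i)}\colon w\in\tran\setminus\{x\}\}$ is, because $\beta$ sends $\tran\setminus\{x\}$ onto $\tran\setminus\{\beta(x)\}$, exactly the line of ${\mathscr V}^{\alpha(i)}$ through $q^{\beta(x)}$ in the target. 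A line of $\lines_0$, namely $\{q^{i_1,j_1},q^{i_2,j_2},q^{i_3,j_3}\}$ arising from a line $\{\{i_1,j_1\},\{i_2,j_2\},\{i_3,j_3\}\}$ of $\goth H$, maps to $\{q^{\alpha(i_1),\alpha(j_1)},q^{\alpha(i_2),\alpha(j_2)},q^{\alpha(i_3),\alpha(j_3)}\}$, which corresponds to the image of that $\goth H$-line under $\{i,j\}\mapsto\{\alpha(i),\alpha(j)\}$ — a line of $\goth K$ by definition, hence a line of the target.

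The only computation that really uses the shape of $\zeta$ is the perspective rays, and it is where I expect to spend the single line of algebra. A ray $\{q^{i,j},x_i,(\xi(i,j)(x))_j\}$ has $F$-image $\{q^{\alpha(i),\alpha(j)},(\beta(x))_{\alpha(i)},(\beta(\xi(i,j)(x)))_{\alpha(j)}\}$, whereas the ray of the target through $q^{\alpha(i),\alpha(j)}$ with first entry $(\beta(x))_{\alpha(i)}$ has third entry $(\zeta(\alpha(i),\alpha(j))(\beta(x)))_{\alpha(j)}$; so it suffices to check $\zeta(\alpha(i),\alpha(j))(\beta(x))=\beta(\xi(i,j)(x))$. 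Substituting the definition, the two $\alpha^{-1}$'s cancel the $\alpha$'s, giving $\zeta(\alpha(i),\alpha(j))=\beta\circ\xi(i,j)\circ\beta^{-1}$, so that $\zeta(\alpha(i),\alpha(j))(\beta(x))=\beta(\xi(i,j)(\beta^{-1}\beta(x)))=\beta(\xi(i,j)(x))$, as wanted. Thus rays go to rays, all four line types are matched, and $F$ is the claimed isomorphism. There is no genuine obstacle here: the entire content is the bookkeeping that forced the conjugation-by-$\beta$, reindexing-by-$\alpha^{-1}$ form of $\zeta$ in the first place.
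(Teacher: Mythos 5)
Your proof is correct, and it is exactly the routine verification the authors had in mind: the paper states this lemma with no proof at all (it is introduced as an ``evident observation''), so your case-by-case check of the four line types---with the conjugation/reindexing computation for the perspective rays, plus the admissibility of $\zeta$ and the finite counting argument for why one-directional line preservation suffices---is precisely the bookkeeping the paper leaves implicit.
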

As an immediate consequence we get
\begin{lem}\label{lem:autMTC3:0}\label{lem:autMTC:pom1}
  Let $\alpha\in S_I$ and $\beta\in S_\tran$.
  If the map $F_0\colon x_i\longmapsto \beta(x)_{\alpha(i)}$
  ($x \in \tran$, $i \in I$) preserves the diagram of $L$
  (cf. Table \ref{tab:hvl:diagram}),
  i.e. $\xi(i,j)(x) = y$ iff $\xi(\alpha(i),\alpha(j))(\beta(x)) = \beta(y)$
  for all $x,y \in \tran$, $i,j \in I$
  then  $\beta\times\alpha$ is a (unique) automorphism of 
  $\xwlepq({I},{},{\xi},{},{\GrasSpace(I,2)})$ extending~$F_0$.
 \par
  Conversely,
  if $F\in\Aut({\xwlepq({I},{},{\xi},{},{\GrasSpace(I,2)})})$ 
  preserves the line $L$ then there are $\alpha\in S_I$ and 
  $\beta\in S_\tran$ such that $F = \beta\times\alpha$.
\end{lem}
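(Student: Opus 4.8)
The plan is to read the first implication straight off Lemma~\ref{lem:isoMTC}, and to obtain the converse by a short collinearity analysis resting only on Construction~\ref{cnstr:MTC} and on the single hypothesis that $F$ fixes the basis $L$. This is why the result can be asserted as an immediate consequence of \ref{lem:isoMTC}.

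For the forward implication I would simply feed the given $\alpha,\beta$ into \ref{lem:isoMTC}. That lemma yields the isomorphism $\beta\times\alpha$ of $\xwlepq({I},{},{\xi},{},{\GrasSpace(I,2)})$ onto $\xwlepq({I},{},{\zeta},{},{\goth K})$, where $\zeta(i,j)=\beta\circ\xi(\alpha^{-1}(i),\alpha^{-1}(j))\circ\beta^{-1}$ and $\goth K$ is the image of $\GrasSpace(I,2)$ under $\{i,j\}\mapsto\{\alpha(i),\alpha(j)\}$. Two observations then close the gap between ``isomorphism'' and ``automorphism''. Since $\GrasSpace(I,2)$ carries \emph{all} triples of $\sub_2(I)$, the relabelling by $\alpha$ sends it onto itself, so $\goth K=\GrasSpace(I,2)$. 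And the diagram-preserving hypothesis $\xi(i,j)(x)=y \Leftrightarrow \xi(\alpha(i),\alpha(j))(\beta(x))=\beta(y)$ is exactly the statement $\beta\circ\xi(i,j)=\xi(\alpha(i),\alpha(j))\circ\beta$ for all $i,j$, which rearranges to $\zeta=\xi$. Hence source and target coincide, $\beta\times\alpha$ is an automorphism, and it visibly restricts to $F_0$ on the points $x_i$.

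For uniqueness I would argue that the values of an automorphism on the points $x_i$ already force it everywhere. In each Veblen figure ${\mathscr V}^i$ the line through $a_i,b_i$ is $\{q^c,a_i,b_i\}$, so each $q^x$ is the unique third point of a line spanned by two of the $x_i$'s; likewise each $q^{i,j}$ is the third point of a perspective ray $\{q^{i,j},x_i,(\xi(i,j)(x))_j\}$. Thus any automorphism extending $F_0$ is pinned down on every $q^x$ and every $q^{i,j}$, and the forced values are precisely those of $\beta\times\alpha$; so $\beta\times\alpha$ is the unique such extension. For the converse, let $F$ be an automorphism fixing $L$ setwise. Restricting $F$ to $L$ gives $\beta\in S_\tran$ via $F(q^x)=q^{\beta(x)}$. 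From \ref{cnstr:MTC} I read off the elementary incidences: each $x_i$ is collinear with exactly the two points $q^y$ ($y\neq x$) of $L$, while each $q^{i,j}$ meets no point of $L$. Hence the points off $L$ that are collinear with some point of $L$ are exactly the $x_i$; as $F$ preserves $L$ and collinearity it preserves this set, and within it the subset not collinear with $q^x$ (namely ${\cal D}_x$) is carried onto the one not collinear with $q^{\beta(x)}$, giving $F({\cal D}_x)={\cal D}_{\beta(x)}$.

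To recover $\alpha$ I would use that two off-$L$ points carry the same index $i$ iff they lie on a common line meeting $L$: such lines are exactly the $\{q^x,y_i,z_i\}$ of ${\mathscr V}^i$, whereas points of distinct indices are joined only by perspective rays, which miss $L$. Since $F$ preserves incidence and $L$, it preserves this ``joined through $L$'' relation, hence permutes the index-blocks $\{a_i,b_i,c_i\}$ and induces $\alpha\in S_I$. Each block meets each ${\cal D}_y$ in a single point, so combining $F({\cal D}_x)={\cal D}_{\beta(x)}$ with the block permutation forces $F(x_i)=\beta(x)_{\alpha(i)}$; by the uniqueness just established, $F=\beta\times\alpha$. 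The only genuinely delicate point is this converse, and there the crux is the purely combinatorial separation of the three kinds of points by their incidences with $L$ together with the index characterisation via lines meeting $L$; I would stress that this needs only $F(L)=L$, not the stronger invariance results of \ref{cor:Qunique}, after which everything reduces to bookkeeping.
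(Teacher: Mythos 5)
Your proposal is correct and takes essentially the same route as the paper: the forward implication is read off Lemma~\ref{lem:isoMTC} (you merely make explicit that $\goth K=\GrasSpace(I,2)$ and $\zeta=\xi$, and that the extension is forced on $q^x$ and $q^{i,j}$ by incidence), and the converse is the same elementary bookkeeping the paper performs, since your ``index blocks'' joined through lines meeting $L$ are exactly the triangles of the Veblen figures ${\mathscr V}^i$ that the paper says $F$ must permute. The only difference is order and explicitness: you extract $\beta$ from $F|_L$ first and prove uniqueness in detail, while the paper obtains per-figure permutations $\beta_i$ and then identifies them all via the action on $L$, leaving uniqueness implicit.
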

\begin{proof}
  The first statement is evident in view of \ref{lem:isoMTC}.
  Let $F\in\Aut({\xwlepq({I},{},{\xi},{},{\GrasSpace(I,2)})})$ 
  preserve $L$. So, $F$ determines a permutation $\alpha$ of the Veblen
  subconfigurations of  ${\xwlepq({I},{},{\xi},{},{\GrasSpace(I,2)})}$
  having $L$ as one of its lines defined by the condition
  $F({\mathscr V}^{i}) = {\mathscr V}^{\alpha(i)}$. 
  Then for every $i\in I$ there is $\beta_i\in S_\tran$
  with $F(x_i) = {\beta_i(x)}_{\alpha(i)}$; 
  this gives $F(q^x) = q^{\beta_i(x)}$ for $x \in \tran$.
  Thus $\beta_i = \beta_j =: \beta$ for all $i,j\in I$
  and, finally, $F = \beta\times\alpha$.
\end{proof}

\subsection{Classifications}\label{sec:classif}

\begin{classif}\label{classif:MTC3}
  Let $I = \{ 1,2,3 \}$ and ${\mathscr A} = \{ a_i,b_i,c_i \colon i \in I \}$.
  Recall that $\rho(a,b,c) = (b,c,a)$. Consider the substructure of 
  ${\goth N} := \xwlepq({I},{},{\xi},{},{\GrasSpace(I,2)})$ determined by $\mathscr A$.
  In the following list we indicate types of possible triples
  $\xi(1,2),\xi(2,3),\xi(1,3)$.
  \begin{typcription}
  \typitem{$\rho,\rho,\rho$}\label{class3:1}
    Let $\xi(1,2) = \xi(2,3) = \xi(1,3) = \rho$.
    Then $\mathscr A$ is the $9$-gon
    $( a_1,b_2,c_3,b_1,c_2,a_3,c_1,a_2,b_3 )$.
  \typitem{$\rho,\rho,\id$}\label{class3:2}
    Set $\xi(1,2) = \rho = \xi(2,3)$ and $\xi(1,3) = \id$.
    Then $\mathscr A$ is the $9$-gon
    $( a_1,b_2,c_3,c_1,a_2,b_3,b_1,c_2,a_3 )$.
  \typitem{$\rho,\id,\id$}\label{class3:3}
    Let $\xi(1,2) = \rho$, $\xi(1,3) = \xi(2,3) = \id$.
    Then $\mathscr A$ is the $9$-gon 
    $( a_1,b_2,b_3,b_1,c_2,c_3,c_1,a_2,a_3 )$.
  \typitem{$\rho,\rho,\rho^{-1}$}\label{class3:4}
    Set $\xi(1,2) = \rho = \xi(2,3) = \xi(3,1)$.
    Then $\mathscr A$ consists of three triangles
    $\left( a_1,b_2,c_3 \right)$,
    $\left( b_1,c_2,a_3 \right)$, and
    $\left( c_1,a_2,b_3 \right)$.
  \typitem{$\rho,\rho^{-1},\id$}\label{class3:5}
    Set $\xi(1,2) = \rho = \xi(3,2)$ and $\xi(1,3) = \id$.
    Then $\mathscr A$ consists of three triangles
    $\left( a_1,b_2,a_3 \right)$,
    $\left( b_1,c_2,b_3 \right)$, and
    $\left( c_1,a_2,c_3 \right)$.
  \typitem{$\sigma_x,\sigma_y,\sigma_z$}\label{class3:6}
    Let $\xi(1,2) = \sigma_a$, $\xi(1,3) = \sigma_c$, and $\xi(2,3) = \sigma_b$.
    Then $\mathscr A$ is the union of the hexagon
    $\left( a_2,c_3,c_1,b_2,b_3,a_1 \right)$ and the triangle 
    $\left( c_2,b_1,a_3 \right)$.
  \typitem{$\sigma_x,\sigma_y,\id$}\label{class3:7}
    Set $\xi(1,2)= \sigma_a$, $\xi(2,3) = \sigma_c$, and $\xi(1,3) = \id$.
    Then $\mathscr A$ is the $9$-gon
    $\left( a_1,a_2,b_3,b_1,c_2,c_3,c_1,b_2,a_3 \right)$.
  \typitem{$\sigma_x,\rho,\id$}\label{class3:8}
    Set $\xi(1,2) = \sigma_a$, $\xi(2,3) = \rho$, and $\xi(1,3) = \id$.
    Then $\mathscr A$ is the union of the 
    hexagon $\left( a_1,a_2,b_3,b_1,c_2,a_3 \right)$
    and the triangle $\left( c_1,b_2,c_3 \right)$.
  \typitem{$\sigma_x,\sigma_y,\rho$}\label{class3:9}
    Set $\xi(1,2) = \sigma_a$, $\xi(2,3) = \sigma_c$, and $\xi(1,3) = \rho$.
    Then $\mathscr A$ consists of three triangles
    $\left( a_1,a_2,b_3 \right)$,
    $\left( b_1,c_2,c_3 \right)$, and 
    $\left( c_1,b_2,a_3 \right)$.
  \typitem{$\sigma_x,\sigma_y,\rho^{-1}$}\label{class3:14}
    Set $\xi(1,2) = \sigma_a$, $\xi(2,3) = \sigma_c$, and $\xi(3,1) = \rho$.
    Then $\mathscr A$ is the $9$-gon 
    $\left( a_1,a_2,b_3,c_1,b_2,a_3,b_1,c_2,c_3 \right)$.
  \typitem{$\sigma_x,\sigma_x,\sigma_y$}\label{class3:10}
    Set $\xi(1,2) = \sigma_c = \xi(2,3)$ and $\xi(1,3) = \sigma_a$.
    Then $\mathscr A$ consists of the 
    hexagon $\left( b_1,a_2,b_3,c_1,c_2,c_3 \right)$
    and the triangle $\left( a_1,b_2,a_3 \right)$.
  \typitem{{$\sigma_x,\sigma_x,\rho$}}\label{class3:11}
    Set $\xi(1,2) = \sigma_c = \xi(2,3)$ and $\xi(1,3) = \rho$.
    Then $\mathscr A$ is the $9$-gon
    $\left( a_1,b_2,a_3,c_1,c_2,c_3,b_1,a_2,b_3 \right)$.
  \typitem{{$\rho,\rho,\sigma_z$}}\label{class3:12}
    Set $\xi(1,2) = \rho = \xi(2,3)$ and $\xi(1,3) = \sigma_c$.
    Then $\mathscr A$ consists of the 
    hexagon $\left( a_1,b_2,c_3,c_1,a_2,b_3 \right)$
    and the triangle $\left( b_1,c_2,a_3 \right)$.
  \typitem{{$\rho,\rho^{-1},\sigma_z$}}\label{class3:13}
    Set $\xi(1,2) = \rho$, $\xi(2,3)=\rho^{-1}$ 
    and $\xi(1,3) = \sigma_c$.
    Then $\mathscr A$ consists of the 
    hexagon $\left( a_1,b_2,a_3,b_1,c_2,b_3 \right)$
    and the triangle $\left( c_1,a_2,c_3 \right)$.
  \end{typcription}
  From \ref{cor:MTC2MVC:no}
  we get that in every of the above cases
  $\goth N$ is not a simple multiveblen configuration.
  The numbers of \Des, \Desv, and \Desr-configurations containing $L$ and 
  contained in respective \MTC's,
  together with other important parameters are presented in Table \ref{tab:MTC3}.
\end{classif}
\begin{prop}\label{prop:class:MTC3}
  Let $|I| = 3$.
  The list \eqref{class3:1}--\eqref{class3:13} of \ref{classif:MTC3}
  exhausts all the possible systems $\xi$ 
  (up to permutations of $I$ and of $\tran$)
  such that 
    $\xwlepq({I},{},{\xi},{},{\GrasSpace(I,2)})$ 
  is not a simple multiveblen configuration
  (cf. \ref{prop:MTC2MVC:1}, \ref{cor:MTC2MVC:no}).
 \par
  The structures in this list are pairwise nonisomorphic.
  Consequently, every $\konftyp(15,4,20,3)$-system of triangle perspectives that
  is not a simple multiveblen configuration is isomorphic to exactly one of the structures
  defined through \eqref{class3:1}--\eqref{class3:13} of \ref{classif:MTC3}.
\end{prop}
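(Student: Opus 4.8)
The plan is to reduce the whole statement to a finite computation of orbits of a group acting on the admissible systems $\xi$, and then to carry that computation out with the help of a few isomorphism invariants. By \ref{prop:MTC2MVC:1} and \ref{cor:MTC2MVC:no} the structure $\xwlepq({I},{},{\xi},{},{\GrasSpace(I,2)})$ fails to be a simple multiveblen configuration precisely when the three permutations $\xi(1,2)$, $\xi(2,3)$, $\xi(1,3)$ have no common fixed point in $\tran$; moreover the argument proving \ref{cor:MTC2MVC:no} shows that in this case no further $K_5$-graph occurs, so $\goth N$ freely contains exactly the three graphs ${\cal K}_1,{\cal K}_2,{\cal K}_3$ of \ref{prop:Lunique}. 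Hence the hypotheses of \ref{prop:Lunique} are met and the basis $L$ is intrinsically characterised (as the set of points lying on none of the $K_5$-graphs), so every isomorphism between two such configurations carries $L$ onto $L$. By the argument of \ref{lem:autMTC3:0} together with \ref{lem:isoMTC}, an isomorphism fixing $L$ setwise has the form $\beta\times\alpha$ with $\alpha\in S_I$, $\beta\in S_\tran$, and it transports $\xi$ to $\zeta(i,j)=\beta\circ\xi(\alpha^{-1}(i),\alpha^{-1}(j))\circ\beta^{-1}$. Thus two non-multiveblen systems of triangle perspectives on $15$ points are isomorphic if and only if their defining systems lie in one orbit of $G:=S_I\times S_\tran$; both the exhaustiveness and the pairwise non-isomorphism asserted in the proposition reduce to the single claim that the admissible $\xi$ form exactly the fourteen $G$-orbits listed in \ref{classif:MTC3}. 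It is essential that we are in the non-multiveblen case: only then is $L$ unique, so that isomorphism of configurations matches $G$-equivalence of systems.

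Next I would enumerate these orbits. Writing $\xi$ as the triple $(\xi(1,2),\xi(2,3),\xi(1,3))\in S_\tran^{3}$, I record that conjugation by $\beta$ acts diagonally, sending $\rho\mapsto\rho^{\pm1}$ and $\sigma_x\mapsto\sigma_{\beta(x)}$, while $\alpha\in S_I$ permutes the three entries and inverts an entry whenever it reverses the corresponding pair of indices. I then split into cases according to the pattern $(k,t,e)$ of numbers of $3$-cycles, transpositions and identities among the entries. The ``no common fixed point'' condition rules out exactly the patterns with $k=0$ in which all transpositions share their fixed point (these are the excluded simple multiveblen configurations, for which $L$ is not unique), and leaves the eight patterns $(3,0,0)$, $(2,0,1)$, $(1,0,2)$, $(0,3,0)$, $(0,2,1)$, $(1,2,0)$, $(1,1,1)$, $(2,1,0)$. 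Inside each pattern I would normalise the positions of the distinguished entries by $S_I$ and their values by conjugation, and read off the representatives; this yields $2,2,1,2,1,3,1,2$ orbits respectively, i.e. exactly the fourteen systems of \ref{classif:MTC3}.

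Finally I would check that these fourteen representatives lie in distinct orbits, using the invariants of Table \ref{tab:MTC3}: the multiset of \Des/\Desv/\Desr-types of the three $\left({10}_{3}\right)$-subconfigurations through $L$ (which, by \ref{repr:10.3inMTC}, is just the multiset of conjugacy classes of $\xi(1,2),\xi(2,3),\xi(1,3)$), the pattern of $\mathscr A$ described in \ref{classif:MTC3} (the $9$-gon, three-triangle or hexagon-plus-triangle type, equal to the conjugacy class of the holonomy $\xi(1,3)^{-1}\circ\xi(2,3)\circ\xi(1,2)$), and the number of distinct transposition-values occurring among the entries. These three invariants separate all fourteen cases except the single pair \eqref{class3:12} $(\rho,\rho,\sigma_z)$ and \eqref{class3:13} $(\rho,\rho^{-1},\sigma_z)$, which share the type-multiset, the hexagon-plus-triangle pattern, and even a transposition as holonomy. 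To separate these I would bring the unique transposition to a fixed pair by $S_I$ and compute the residual orbit: one finds that for \eqref{class3:12} the two remaining $3$-cycle entries are always mutually inverse, whereas for \eqref{class3:13} they are always equal, so the residual orbits are disjoint; equivalently, the sign recording whether the two $3$-cycles emanating from the index opposite the transposition are equal or inverse is a $G$-invariant (the only operations able to replace a $3$-cycle by its inverse, namely an odd $\beta$ or the orientation reversal coming from $S_I$, act on both of them at once) taking different values on the two systems. I expect this last separation to be the main obstacle, precisely because it is the one place where all the geometric invariants coincide and one is forced to argue at the level of the $G$-action itself.
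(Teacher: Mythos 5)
Your proposal is correct, and at the skeleton level it is the paper's own: both classify the triples $(\xi(1,2),\xi(2,3),\xi(1,3))$ up to the action of $G=S_I\times S_\tran$ (the paper's $3$-multisets over the classes $\{\rho,\rho^{-1}\}$, $\{\sigma_a,\sigma_b,\sigma_c\}$, $\{\id\}$ are exactly your patterns $(k,t,e)$), and both use as first invariants the numbers of \Des-, \Desv-, \Desr-subconfigurations through $L$ together with the polygon type of $\mathscr A$, which fail on precisely the pairs (\eqref{class3:6},\eqref{class3:10}), (\eqref{class3:14},\eqref{class3:11}), (\eqref{class3:12},\eqref{class3:13}). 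You diverge in two genuine ways. First, you state explicitly the key reduction that for non-multiveblen \MTC's isomorphism of configurations equals $G$-equivalence of defining triples, justified by the canonicity of $L$ (\ref{prop:Lunique}) once the proof of \ref{cor:MTC2MVC:no} guarantees there is no fourth $K_5$; the paper never formulates this equivalence and keeps all its non-isomorphism arguments geometric. Second, granted that reduction, you finish the three ambiguous pairs by purely algebraic $G$-invariants: the number of distinct transposition values (separating the first two pairs) and the relative sign -- equal versus inverse -- of the two $3$-cycles read from the vertex opposite the unique transposition (separating (\eqref{class3:12},\eqref{class3:13})); both are indeed preserved by conjugation and by the position-permutation-with-inversion action, and they take the values you claim. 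The paper instead separates these pairs via the family $\mathscr D$: the unique triangle in the diagram of $L$ meets all three sets ${\cal D}_x$ in one member of a pair but misses some ${\cal D}_x$ in the other, resp.\ one $9$-gon contains three consecutive points inside a single ${\cal D}_x$ while the other does not. Your route buys a fully mechanical orbit computation -- exactly the ``edge-colorings of the complete digraph by elements of $S_3$'' framework that the introduction says must take over for $n>5$ -- plus the tidy identification of the polygon type of $\mathscr A$ with the conjugacy class of the holonomy $\xi(1,3)^{-1}\circ\xi(2,3)\circ\xi(1,2)$; the paper's route buys distinguishing features that are visibly intrinsic geometric properties of the configurations, at the cost of three ad hoc arguments.
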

\begin{proof}
  Any system $\xi$ is determined by three maps: $\xi(1,2),\xi(2,3),\xi(1,3)$,
  each one in one of the following classes 
  $P = \{  \rho,\rho^{-1} \}$, $\Sigma = \{ \sigma_a\sigma_b,\sigma_c \}$,
  and ${\Delta} = \{ \id \}$.
  Thus (cf. \ref{lem:isoMTC})
  the type of $\xi$ can be represented by a 3-multiset with elements in the above 
  three classes. Let us take into account three observations. 
  Firstly, the type $3\times P$
  represents both $2\times\rho+\rho^{-1}$ and $3\times\rho$.
  Secondly, $3\times\Sigma$ represents  $\sigma_{x_1},\sigma_{x_2},\sigma_{x_3}$
  where either the $x_i$ are distinct or some of them coincide 
  (similar remark concerns $2\times P$ and $2\times\Sigma$).
  Thirdly, $\sigma_x\sigma_y = \rho$ or $\sigma_x\sigma_y = \rho^{-1}$;
  the arising configurations are distinguishable by 
  \ref{cor:Qunique} (look at the diagram of $L$ in corresponding structures).
  Then canceling types which, in view of \ref{prop:MTC2MVC:1} produce a
  multiveblen configuration (e.g. $(\sigma_x,\sigma_x,\id)$, which fixes $x$) we obtain
  the list considered in \ref{classif:MTC3}.
  \par
  We write ${\goth N}_{\sf n}$ for the structure defined in (n) of \ref{classif:MTC3},
  with ${\sf n} = 1,...,14$.
  To prove that these structures  
  are pairwise nonisomorphic note, first, that if two of them 
  would be isomorphic then the numbers of their  
  \Des, \Desv, and \Desr-configurations having $L$ as 
  one of its lines should agree.
  If these numbers agree, one compares the type of polygons $\mathscr A$
  (i.e. the diagrams of $L$).
  The above procedure doses not distinguish structures in the following three pairs only:
  (\eqref{class3:6},\eqref{class3:10}), (\eqref{class3:14},\eqref{class3:11}),
  and
  (\eqref{class3:12},\eqref{class3:13}).
 \par
  Let us consider the pair 
  ${\goth N}_{\text{\ref{class3:12}}},\;{\goth N}_{\text{\ref{class3:13}}}$.
  Suppose that $F$ is an isomorphism of ${\goth N}_{\text{\ref{class3:13}}}$ onto 
  ${\goth N}_{\text{\ref{class3:12}}}$.
  Then $F$ maps the line $L$ onto itself.
  Moreover, $F$ maps the  
  triangle (unique in the diagram of $L$) $\Delta_1 = (c_1,a_2,c_3)$ of 
  ${\goth N}_{\text{\ref{class3:13}}}$
  onto the triangle (analogously the unique one) 
  $\Delta_2 = (b_1,c_2,a_3)$ of ${\goth N}_{\text{\ref{class3:12}}}$.
  The triangle $\Delta_2$ crosses each set in the family ${\mathscr D}$ defined
  over ${\goth N}_{\text{\ref{class3:12}}}$,
  while $\Delta_1$
  misses ${\cal D}_b$ of ${\goth N}_{\text{\ref{class3:13}}}$.
  The respective families $\mathscr D$ should be preserved by isomorphisms,
  so no  isomorphism may map $\Delta_1 $ onto $\Delta_2$ and thus
  ${\goth N}_{\text{\ref{class3:12}}}\not\cong{\goth N}_{\text{\ref{class3:13}}}$.
  \par
  Let us consider the pair 
  ${\goth N}_{\text{\ref{class3:6}}},\;{\goth N}_{\text{\ref{class3:10}}}$.
  Suppose that $F$ is an isomorphism of the respective structures. Then $F$
  maps the triangle $\Delta_1 = (c_2,b_1,a_3)$ of ${\goth N}_{\text{\ref{class3:6}}}$ onto
  the triangle $\Delta_2 = (a_1,b_2,a_3)$ of ${\goth N}_{\text{\ref{class3:10}}}$. 
  The reasoning as above shows, that no isomorphism may map $\Delta_1$ onto $\Delta_2$
  and, consequently, 
  ${\goth N}_{\text{\ref{class3:6}}} \not\cong {\goth N}_{\text{\ref{class3:10}}}$.
  \par
  Finally,
  let us consider the pair 
  ${\goth N}_{\text{\ref{class3:14}}},\;{\goth N}_{\text{\ref{class3:11}}}$.
  The $9$-gon $\mathscr A$ of ${\goth N}_{\text{\ref{class3:11}}}$ contains three
  consecutive points in one element of the family $\mathscr D$ 
  (namely: the sequence $(c_1,c_2,c_3)$ in ${\cal D}_c$),
  while no such subsequence can be found in the $9$-gon around $L$ in 
  ${\goth N}_{\text{\ref{class3:14}}}$. Therefore
  ${\goth N}_{\text{\ref{class3:14}}} \not\cong {\goth N}_{\text{\ref{class3:11}}}$.
\end{proof}

\begin{table}[h!]
\small
\begin{center}
\begin{tabular}{p{0.25\textwidth}{l}lcccccccccccccc}
\strut  & \multicolumn{13}{c}{the type in the classification of \ref{classif:MTC3}}
\\
the number of & \ref{class3:1} & \ref{class3:2} & \ref{class3:3} & 
\ref{class3:4} & \ref{class3:5} & \ref{class3:6} & \ref{class3:7} & 
\ref{class3:8} & \ref{class3:9} & \ref{class3:14} & \ref{class3:10} & 
\ref{class3:11} & \ref{class3:12} & \ref{class3:13}
\\ \hline
{\tiny \Des-subconfigurations}  & 0 & 1 & 2 & 0 & 1 & 0 & 1 & 1 & 0 & 0  & 0  & 0  & 0  & 0
\\
{\tiny \Desv-subconfigurations} & 0 & 0 & 0 & 0 & 0 & 3 & 2 & 1 & 2 & 2 & 3 & 2  & 1  & 1
\\
{\tiny \Desr-subconfigurations} & 3 & 2 & 1 & 3 & 2 & 0 & 0 & 1 & 1 & 1 & 0 & 1  & 2  & 2
\\ \hline
{\tiny triangles around $L$} & 0 & 0 & 0 & 3 & 3 & 1 & 0 & 1 & 3 & 0  & 1  & 0  & 1 & 1
\\
{\tiny hexagons around $L$}  & 0 & 0 & 0 & 0 & 0 & 1 & 0 & 1 & 0 & 0 & 1  & 0  & 1 & 1
\\
{\tiny $9$-gons around $L$}  & 1 & 1 & 1 & 0 & 0 & 0 & 1 & 0 & 0 & 1 & 0  & 1  & 0 & 0
\end{tabular}
\end{center}
\caption{Parameters of $\konftyp(15,4,20,3)$-systems of triangle perspectives.}
\label{tab:MTC3}
\end{table}

\begin{rem}\label{rem:MTC3:MVC}
  Note that the three $\konftyp(15,4,20,3)$ simple multiveblen configurations that are 
  systems of triangle perspectives can be 
  associated (as in \ref{classif:MTC3}) with the following triples
  $(\xi(1,2),\xi(2,3),(\xi(1,3)))$
  ($J = \{ 1,2,3,4 \}$):  
  \begin{typcription}
  \typitem{$\id,\id,\id$} 
    $\xwlepp({J},{p},{K_J},{},{\GrasSpace(J,2)}) \cong \GrasSpace(6,2)$,
  \typitem{$\sigma_x,\sigma_x,\sigma_x$}  
    $\xwlepp({J},{p},{N_J},{},{\GrasSpace(J,2)}) \cong \VeblSpace(4)$ 
    (cf. \cite{pascvebl} for a definition of the latter structure),
  \typitem{$\sigma_x,\sigma_x,\id$, or $\id,\id,\sigma_x$}
    $\xwlepp({J},{p},{L_J},{},{\GrasSpace(J,2)})$, where $L_J$ is a linear graph on $J$.
  \end{typcription}
\end{rem}
\begin{rem}\label{exm:strange:1}
  Let $I = \{ 1,2,3 \}$, $\xi(1,2)= \id$, $\xi(2,3) = \rho$, and $\xi(1,3) = \rho$
  (cf. \ref{classif:MTC3}\eqref{class3:5}).
  Then $\xwlepq({I},{},{\xi},{},{\GrasSpace(I,2)}) \cong
  \xwlepp({\mathscr Z},{q^{1,2}},{K_{\mathscr Z}},{},{\goth V})$, 
  where 
  ${\mathscr Z} = {\tran\cup\{ 0 \}}$
  and the Veblen configuration $\goth V$ defined on $\sub_2({\mathscr Z})$ has the lines
  \begin{ctext}
  $\sub_2(\tran)$,
  $\{ \{ 0,a \}, \{ 0,b \}, \{ b,c \} \}$,
  $\{ \{ 0,b \}, \{ 0,c \}, \{ a,c \} \}$, and
  $\{ \{ 0,c \}, \{ 0,a \}, \{ a,b \} \}$.
  \end{ctext}
  Clearly, $\xwlepq({I},{},{\xi},{},{\GrasSpace(I,2)})$
  is not any {\em simple} multiveblen configuration.
\end{rem}
\begin{proof}
  Let us write in the definition of a multiveblen configuration
  $j_a$ instead of $a_j$, and $j_b$ instead of $b_j$ ($j$ is an element of 
  the ``indexing'' set).
  Next, replace 
  the symbols $a,b$ used as indices
  by the symbols `$1$' and `$2$' resp.  
  (in particular, this operation relabels
  $(a_a,b_a,a_b,b_b,a_c,b_c) \mapsto (a_1,a_2,b_1,b_2,c_1,c_2)$).
  Finally, it suffices to denote $p = q^{1,2}$, $0_1 = q^{1,3}$, $0_2 = q^{2,3}$,
  write down the points on lines through $p$,
  and apply definitions of a multiveblen configuration and \ref{cnstr:MTC}.
  Note two examples: the triples $(b_1,c_1,q^a)$ and $(b_2,c_2,q^a)$ are collinear
  and thus $q^a = \kros(b,c)$; the triples $(q^{1,3},c_1,a_3)$ and $(q^{2,3},c_2,a_3)$
  are collinear and thus $a_3 = \kros(0,c)$; and so on.
\end{proof}
%
%
%
%
\begin{rem}\label{rem:MTC3:MVC:nsl}\normalfont
  Note that a multiveblen configuration has a center i.e. a point $p$ such that
  any two lines through $p$ yield a Veblen configuration.
  Considering the lists of \ref{classif:MTC3} and \ref{rem:MTC3:MVC}
  and taking into account \ref{exm:strange:1} we get that
  {\em exactly four $\konftyp(15,4,20,3)$-multiveblen configurations
  can be presented as a system of triangle perspectives}:
  those quoted in \ref{rem:MTC3:MVC} and the one defined in
  \ref{classif:MTC3}\eqref{class3:5}.
  Since there are more than four $\konftyp(15,4,20,3)$-multiveblen configurations
  (cf. \cite{twistfan}) we get that
  {\em not every multiveblen configuration can be presented as a 
  system of triangle perspectives}.
\end{rem}

\begin{rem}\label{rem:MTC3:ver}
  It is known that every Veronese configuration $\VerSpace(3,k)$
  freely contains exactly three $K_{k+1}$-graphs.
  In particular, $\VerSpace(3,4)$ is a $\konftyp(15,4,20,3)$-configuration
  with three $K_5$-graphs, so it is an \MTC.
  Indeed, 
  $\VerSpace(3,4)$ is isomorphic to
  the structure defined in \ref{classif:MTC3}\eqref{class3:6}.
\end{rem}
\begin{proof}
  Let ${\goth V} = \VerSpace(\tran,4)$; set $I = \{ 1,2,3 \}$,
  $\xi(1,2)= \sigma_a$, $\xi(2,3) = \sigma_c$, and $\xi(1,3) = \sigma_b$.
  The following relabelling establishes an isomorphism of 
  ${\goth V}$ and $\xwlepq({I},{},{\xi},{},{\GrasSpace(I,2)})$:
  $abc \tran \mapsto L$,
  $a^2bc \mapsto q^a$, $ab^2c\mapsto q^b$, $abc^2 \mapsto q^c$,
  $ab^3\mapsto a_1$,
  $a^3b\mapsto b_1$,
  $a^2b^2\mapsto c_1$,
  $ac^3\mapsto a_2$,
  $a^2c^2\mapsto b_2$,
  $a^3c\mapsto c_2$,
  $b^2c^2\mapsto a_3$,
  $bc^3\mapsto b_3$,
  $b^3c\mapsto c_3$,
  $a^4\mapsto q^{1,2}$,
  $c^4\mapsto q^{2,3}$,
  $b^4\mapsto q^{1,3}$,
  ${\tran}^4\mapsto Q$.
\end{proof}
%


\subsection{Automorphisms}\label{sec:auty}

In view of \ref{lem:autMTC:pom1} and \ref{prop:Lunique}, 
if  the structure $\xwlepq({I},{},{\xi},{},{\GrasSpace(I,2)})$
is not a simple multiveblen configuration
to determine its automorphism group
it suffices to determine automorphisms of the diagram of $L$
(suitable permutations of its rows and columns).

\begin{prop}\label{prop:autofallbol}
   Let ${\goth N} = \xwlepq({I},{},{\xi},{},{\GrasSpace(I,2)})$ be one of the structures 
   defined in \ref{classif:MTC3}. 
   We write $P$ for the $C_3$-group generated by $\rho$. If 
   $i \in I = \{1,2,3\}$ we write
   $\sigma_i$ for the transposition of the elements in $I\setminus\{i\}$ -- similarly we write 
   $\varrho$
   for the cycle $(2,3,1)$ and $R$ for the group generated by $\varrho$. 
   The group $\goth G$ of automorphisms of $\goth N$ consists of all the 
   maps $\beta \times \alpha$ where $\alpha\in S_{I}$ and $\beta\in S_\tran$
   such that:
   \begin{description}\itemsep-2pt
   \item[{\normalfont cases \eqref{class3:1}, \eqref{class3:2}, \eqref{class3:3}}:]\quad
     $\alpha = \id$, $\beta\in P$ or $\alpha = \sigma_{i_0}$, $\beta = \sigma_x$,
     $x \in \tran$,
     where in the corresponding cases $i_0 = 2$, $i_0 = 2$, and $i_0 = 3$;
     \quad then ${\goth G} \cong C_2\ltimes C_3 = S_3$.
   \item[{\normalfont case \eqref{class3:4}}:]\quad
     $\alpha \in R$, $\beta \in P$
     or 
     $(\alpha,\beta) = (\sigma_i,\sigma_x)$ with 
     $x\in \tran$, $i \in I$; \quad
     then ${\goth G} \cong C_2 \ltimes (C_3\oplus C_3)$.
   \item[{\normalfont case \eqref{class3:5}}:]\quad
     $\alpha\in \{\id, \sigma_2\}$, $\beta\in P$; \quad then ${\goth G} \cong C_2 \oplus C_3$.
   \item[{\normalfont case \eqref{class3:6}}:]\quad
     $(\alpha,\beta) = (\varrho^m,\rho^m)$ with $m = 0,1,2$
     or 
     $(\alpha,\beta) = (\sigma_i,\sigma_x)$ with 
     $(x,i)\in \{ (b,1),(c,2),(a,3) \}$; \quad
     then ${\goth G} \cong S_{\tran} = S_3$
     (cf. \ref{rem:MTC3:ver} and \cite[Prop. 5]{pascvebl}).
   \item[{\normalfont cases \eqref{class3:7}, \eqref{class3:9}, %
    \eqref{class3:14}, \eqref{class3:11},  \eqref{class3:12}}:]\quad
     $(\alpha,\beta) = (\id,\id)$ or
     $(\alpha,\beta) = (\sigma_2,\sigma_{x_0})$,
     where in the corresponding cases
     $x_0 = b$, $x_0 = b$, $x_0 = b$, $x_0 = c$, and $x_0 = c$; \quad 
     then ${\goth G} \cong C_2$.
   \item[{\normalfont case \eqref{class3:8}}:]\quad
     $\alpha = \id$, $\beta = \id$.
   \item[{\normalfont cases \eqref{class3:10}, \eqref{class3:13}}:]\quad
     $\alpha \in \{ \id,\sigma_2 \}$, $\beta = \id$; \quad then ${\goth G} \cong C_2$.
   \end{description}
\end{prop}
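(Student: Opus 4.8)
The plan is to reduce the statement to a finite search inside $S_\tran\times S_I\cong S_3\times S_3$ and then to run that search over the fourteen cases of \ref{classif:MTC3}. None of the structures $\goth N$ listed there is a simple multiveblen configuration (by \ref{cor:MTC2MVC:no}), so \ref{prop:Lunique} makes the base line $L$ and the diagram of $L$ invariant under every automorphism; hence \ref{lem:autMTC:pom1} applies and \emph{every} $F\in\goth G$ has the form $F=\beta\times\alpha$ with $\alpha\in S_I$, $\beta\in S_\tran$. By \ref{lem:isoMTC} such an $F$ is an automorphism exactly when
\[
  \xi(\alpha(i),\alpha(j)) \;=\; \beta\circ\xi(i,j)\circ\beta^{-1}
  \qquad\text{for all } i,j\in I .
\]
Since $(\beta'\times\alpha')\circ(\beta\times\alpha)=(\beta'\beta)\times(\alpha'\alpha)$, the map $(\beta,\alpha)\mapsto\beta\times\alpha$ is an injective homomorphism, so $\goth G$ is precisely the subgroup of $S_\tran\times S_I$ cut out by the displayed equations, and the whole task is to solve them.

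To run the search efficiently I would record two structural observations. The first is a conjugation dictionary in $S_\tran$: conjugation by $\beta$ fixes $\id$, carries $\sigma_x$ to $\sigma_{\beta(x)}$, and fixes each of $\rho,\rho^{-1}$ when $\beta\in P$ while interchanging them when $\beta$ is a reflection. The second is a type-preservation principle: both conjugation and inversion preserve the type of an element (rotation, reflection, or identity), so, writing $\xi$ as the triple of values indexed by the three unordered pairs of $I$, the equations force $\alpha$ to permute those pairs \emph{preserving their types}; in particular $\alpha=\id$ whenever the three values carry three distinct types, since only the identity fixes all three pairs of a three-element set. Complementarily, the subgroup of $\goth G$ fixing every row, namely $\goth G\cap(S_\tran\times\{\id\})$, is just the common centralizer in $S_\tran$ of the values of $\xi$: this equals $P$ (a factor $C_3$) when those values lie in $\{\id,\rho,\rho^{-1}\}$, but collapses to $\{\id\}$ as soon as a reflection, or two distinct reflections, occur among them. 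These remarks reduce each case to checking, for the few $\alpha$ admitted by type, which $\beta$ solve the resulting constraint on parity and on $\beta|_{\{a,b,c\}}$.

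The list is then traversed mechanically; three cases exhibit the whole range. In \eqref{class3:1}, with all values $\rho$, the choice $\alpha=\id$ forces $\beta\in P$, while among the transpositions only $\sigma_2$ (the one reversing all three pairs) survives, with $\beta$ a reflection so that $\beta\rho\beta^{-1}=\rho^{-1}$; this gives six pairs. In \eqref{class3:8}, with values $\sigma_a,\rho,\id$ of three distinct types, type-preservation forces $\alpha=\id$ and then $\beta$ must centralize both $\sigma_a$ and $\rho$, so $\beta=\id$ and the diagram is rigid. In \eqref{class3:4}, with the cyclically arranged values $\rho,\rho,\rho^{-1}$, each $\alpha\in R$ is compatible with every $\beta\in P$ and each transposition with every reflection, giving eighteen pairs. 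Throughout I would use the polygon description of $\mathscr A$ in \ref{classif:MTC3} as a cross-check, an admissible $\beta\times\alpha$ being exactly a symmetry of that labelled $3\times3$ collinearity diagram that permutes rows by $\alpha$ and columns by $\beta$.

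Finally I would read off the group in each case: its order is the product of the row-fixing factor above and the number of admitted $\alpha$, and its isomorphism type is decided by how an order-$2$ element meets any rotation freedom. When that $C_3$-freedom is present, an involution paired with $\beta\in P$ centralizes it, giving an abelian group --- this is \eqref{class3:5}, where $\alpha\in\{\id,\sigma_2\}$ and $\beta\in P$ vary independently to give $C_2\oplus C_3$ --- whereas one paired with a reflection inverts it, giving a dihedral group, namely the $S_3$ of \eqref{class3:1}--\eqref{class3:3} and \eqref{class3:6} and the $C_2\ltimes(C_3\oplus C_3)$ of \eqref{class3:4}. When there is no such freedom the answer is $\goth G\cong C_2$ (a single admissible involution) in \eqref{class3:7}, \eqref{class3:9}, \eqref{class3:14}, \eqref{class3:11}, \eqref{class3:12}, \eqref{class3:10}, \eqref{class3:13}, and $\goth G$ trivial in \eqref{class3:8}. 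The proposition is, as the preceding remark notes, essentially immediate from \ref{lem:autMTC:pom1} and \ref{prop:Lunique}; the one genuine obstacle is the bookkeeping --- tracking, uniformly across all fourteen diagrams, which pairs each $\alpha$ reverses and hence which values must be inverted --- so the real risk is an inverse-or-parity slip, which the polygon cross-check is there to catch.
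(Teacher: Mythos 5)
Your proposal is correct, and its skeleton coincides with the paper's: both arguments invoke \ref{cor:MTC2MVC:no} to see that none of the fourteen structures is a simple multiveblen configuration, hence by \ref{prop:Lunique} the line $L$ and its diagram are automorphism-invariant, and by \ref{lem:autMTC:pom1} every automorphism is of the form $\beta\times\alpha$. Where you genuinely diverge is in how the case-by-case check is executed. The paper works geometrically: in each case it pins down $\alpha$ by counting \Des-, \Desv-, \Desr-subconfigurations through $L$ (or by comparing the substructures spanned by pairs ${\mathscr V}^i\cup{\mathscr V}^j$), and then pins down $\beta$ using invariant features of the polygon $\mathscr A$ --- unique triangles, paths inside the sets ${\cal D}_x$, and so on; sufficiency of the listed maps is only asserted as ``easy to check.'' You instead translate the whole problem, via \ref{lem:isoMTC}/\ref{lem:autMTC:pom1}, into the conjugacy equations $\xi(\alpha(i),\alpha(j))=\beta\circ\xi(i,j)\circ\beta^{-1}$ and solve them in $S_\tran\times S_I$ with centralizers, the conjugation dictionary, and type-preservation. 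This buys several things the paper's write-up does not make explicit: necessity and sufficiency are handled in one stroke; the observation that $(\beta,\alpha)\mapsto\beta\times\alpha$ is an injective homomorphism exhibits $\goth G$ directly as a subgroup of $S_\tran\times S_I$, so the isomorphism types ($S_3$, $C_2\ltimes(C_3\oplus C_3)$, $C_2\oplus C_3$, $C_2$, trivial) can be read off rather than inferred; and the method scales uniformly to larger $I$, in the spirit of the paper's own remark that the general classification amounts to $S_3$-edge-colorings of complete digraphs. What the paper's route buys is economy within its own framework: the invariants it uses (the \Des/\Desv/\Desr counts, triangles, paths, the family $\mathscr D$) are exactly those already deployed to prove the structures pairwise nonisomorphic in \ref{prop:class:MTC3}. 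Two small cautions on your shorthand: your claim that the row-fixing subgroup ``collapses to $\{\id\}$ as soon as a reflection occurs'' needs the proviso that a lone reflection accompanied only by identities would centralize to $C_2$ --- such triples fix a letter and are excluded as simple multiveblen configurations by \ref{prop:MTC2MVC:1}, so no harm arises here, but the statement is not true of arbitrary triples; and in case \eqref{class3:6} the $C_3$ that your involution inverts is the diagonal $\{(\varrho^m,\rho^m)\}$, not a row-fixing factor, so your order formula should be read as $|\goth G|=|\goth G\cap(S_\tran\times\{\id\})|\cdot|\pi_I(\goth G)|$. Finally, a complete write-up would have to traverse all fourteen cases, not just the three you exhibit, but since the method is mechanical and your exhibited cases (and the stated answers for the rest) agree with the paper, this is bookkeeping rather than a gap.
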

 \begin{proof} 
  It is easy to check that in every one of the cases \eqref{class3:1}-\eqref{class3:13} 
  given  maps yield automorphisms of the diagram of $L$. 
  We must verify that $\goth N$ has no other automorphisms.
\par
We say that a set of points $\{a_1,\ldots,a_{n+1}\}$ is a {\em path of length $n$} if 
points $a_i, a_{i+1}$ are  collinear for all $i=1,\ldots,n$ 
and no three consecutive points from this set are on a line. 
\par
  Let $F = \beta\times\alpha \in \Aut({\goth N})$.
  Recall that 
  $F({\mathscr V}^{i}) = {\mathscr V}^{\alpha(i)}$.
  Moreover, if $F(X_i) = Y_j$ then $F(x_i) = y_j$.
  Let us examine corresponding cases of \ref{classif:MTC3}.
%
 \begin{sentences}\itemsep-2pt
   \item[{\normalfont \eqref{class3:1}}:]\quad
      Clearly, $F$
      preserves the $9$-gon $\mathscr A$ in the diagram of $L$;
      %
      %
      let $f$ be the induced automorphism of $\mathscr A$.
      If $f$ is the rotation $\tau_1$ of $\mathscr A$ on $1$ item, then 
      $f(a_1) = b_2$, $f(a_3) = c_1$, so, inconsistently,
      $c = \beta(a) = b$.
      If $f$ is the rotation $\tau_2$ on $2$ items then 
      $f(a_1) = c_3$, $f(a_3) = a_2$ and again a contradiction arises.
      Finally, only the rotation $\tau_3$ of $\mathscr A$ on $3$ items and, consequently, 
      the rotation $\tau_6$ on $6$ items can be written in the form $\beta\times\alpha$
      and then $\alpha = \id$, $\beta \in P$.
      Now, let $f$ be a reflection in a point $d$ of $\mathscr A$.
      Analyzing   $\mathscr A$  we obtain
      $d = a_2$, $d = b_2$ or $d = c_2$, for $\beta=\sigma_a,\sigma_b,\sigma_c$ respectively,
       and $\alpha=\sigma_2$. 
   \item[{\normalfont \eqref{class3:2}}:]\quad
     First, note that $\alpha(2)=2$ since there is only one \Des-configuration
     that contains $L$ and this one contains
     ${\mathscr V}^1$ and ${\mathscr V}^3$.
     Therefore $F$ leaves the set $H:=\{ a_2,b_2,c_2  \}$ invariant.
     Consider the $9$-gon $\mathscr A$ in the diagram of $L$; 
     %
    an automorphism of $\mathscr A$ that  preserves simultaneously $H$
    is either the rotation on $3$ or $6$ items ($b_2\mapsto a_2$ or $b_2 \mapsto c_2$)
    or the reflection in one of the points in $H$.
    This gives the claim.
  \item[{\normalfont \eqref{class3:3}}:]\quad
     Note that 
     $A_3,B_3,C_3$ are the unique lines distinct from $L$, which are contained in both of
     \Des-subconfigurations of $\goth N$. 
     Therefore, $F$ preserves these lines, so $\alpha(3) = 3$.
     Moreover, $F$ yields an automorphism $f$ of the $9$-gon $\mathscr A$
     %
     i.e. its suitable rotation or symmetry; taking into account the fact that
     the set $\{ a_3,b_3,c_3 \}$ must be invariant under $f$ we get the claim.
   \item[{\normalfont \eqref{class3:4}}:]\quad
     It suffices to show that 
     the maps 
       $f_1 = \sigma_c\times \id$ and $f_2 = \id\times\sigma_2$ 
     do not preserve the diagram of $L$.
     Indeed, 
     $b_1 \collin c_2$ and $f_1(b_1) = a_1 \ncollin c_2 = f_1(c_2)$,
     and
     $b_1 \collin a_3$ and $f_2(b_1) = b_3 \ncollin a_1 = f_2(a_3)$.
   \item[{\normalfont \eqref{class3:5}}:]\quad
     The unique \Des-subconfiguration of $\goth N$ containing $L$ contains 
     ${\mathscr V}^1$ and ${\mathscr V}^3$ 
     and thus $F$ maps ${\mathscr V}^2$ onto itself; 
     therefore $\alpha(2)=2$ and $\alpha = \id,\sigma_2$. 
     Finally, note that 
     for every $x \in \tran$ the diagram of $L$ contains the triangle $(x_1, x_3, \rho(x)_2)$
     and no triangle $(x_1,x_3,y_2)$ with $y \neq \rho(x)$;
     consequently, $\beta \neq \sigma_x$.
   \item[{\normalfont \eqref{class3:6}}:]\quad
     It suffices to note that $F$ leaves the triangle
     $(b_1,c_2,a_3)$, the unique one in the diagram of $L$, invariant.
   \item[{\normalfont \eqref{class3:7}}:]\quad
     Note that the set ${\cal D}_x\cap {\mathscr A}$ is 
     either a path of length $2$ if $x=a,c$,
     or a union of a path of length $1$ and the set $\{b_2\}$ if $x=b$. 
     Thus $\beta(b)=b$, and also $F(b_2)=b_2$, which 
     gives $\alpha(2)=2$. 
     The point $a_1$ is the center of the path ${\cal D}_a\cap {\mathscr A}$, and
     $c_3$ is the center of the path ${\cal D}_c\cap {\mathscr A}$. 
     Hence, $F$ fixes each of these two centers or interchanges them.
     Finally we get that either $F=\sigma_b\times\sigma_2$ or $F=\id\times\id$.
   \item[{\normalfont \eqref{class3:8}}:]\quad
     Considering substructures of $\goth N$ spanned by ${\mathscr V}^{i}\cup{\mathscr V}^{j}$
     (that three are pair wise non isomorphic) we get that $\alpha = \id$.
     Then, since $F$ leaves the  triangle $(c_1,b_2,c_3)$ invariant, 
     we conclude with $\beta = \id$.
   \item[{\normalfont \eqref{class3:9}}:]\quad
     The unique \Desr-subconfiguration of $\goth N$ containing $L$ contains
     ${\mathscr V}^1$ and ${\mathscr V}^3$ 
     and thus 
     $\alpha(2) = 2$.
     The unique triangle in the diagram of $L$ which contains a point of each of $D_x$ 
     is $(c_1,b_2,a_3)$, so $F$ preserves it. 
     Particularly, $F(b_2) = b_2$ and the set $\{c_1,a_3\}$ remains invariant under $F$. 
     Finally we get that either $F=\sigma_b\times\sigma_2$ or $F=\id\times\id$.
   \item[{\normalfont \eqref{class3:14}}:]\quad
     As in \eqref{class3:9} we obtain $\alpha(2) = 2$.
     The set ${\cal D}_x\cap {\mathscr A}$ contains no path only for $x=b$, so $\beta(b)=b$.
      Since a path $\Gamma$ 
      is contained in a set 
      ${\cal D}_x\cap {\mathscr A}$,   $x\in\tran$
      exactly when 
      $\Gamma=\{a_1,a_2\}$ or $\Gamma=\{c_2,c_3\}$, 
      either the map $F$ preserves each of these paths and then $F=\id\times\id$, or
      $F$ interchanges them, that gives $F=\sigma_b\times\sigma_2$.
   \item[{\normalfont \eqref{class3:10}}:]\quad
     The set ${\cal D}_x$ has two, one,  and none common elements with the unique triangle
     (the triangle $\{a_1,b_2,a_3\}$) 
     in the diagram of $L$
     exactly when $x=a$, $x=b$, and $x=c$, respectively. 
     Therefore, 
     we get $\beta=\id$. It means also that, in particular, $F(b_2)=b_2$, so 
     $\alpha = \id,\sigma_2$.
   \item[{\normalfont \eqref{class3:11}}:]\quad
     The set ${\cal D}_x\cap {\mathscr A}$ is a path of length $2$ only for $x=c$, 
     thus $\beta(c)=c$ and $F({\mathscr D}_c)={\mathscr D}_c$. 
     The point $c_2$ is  
     the center of this path, so $F(c_2)=c_2$, and then $\alpha(2)=2$.
     The unique points from ${\mathscr A}\setminus {\cal D}_c$ that are collinear 
     with a point of ${\cal D}_c$ are 
     $a_3,b_1$. Therefore the set $\{a_3,b_1\}$ remains invariant under $F$.
     Consequently, either $F=\sigma_c\times\sigma_2$ or $F=\id\times\id$.
   \item[{\normalfont \eqref{class3:12}}:]\quad
     The unique \Desv-subconfiguration of $\goth N$ having $L$ as its line contains 
     ${\mathscr V}^1$ and ${\mathscr V}^3$, so $\alpha(2) = 2$.
     Consider the unique triangle $(b_1,c_2,a_3)$ in the diagram of $L$;
     in view of the above, $F(c_2) = c_2$ which gives $\beta(c) = c$.
     Thus either $F$ fixes $b_1,a_3$ and then $\beta = \id$ and $\alpha = \id$,
     or $F$ interchanges the points $b_1,a_3$, which gives $\beta = \sigma_c$
     and $\alpha = \sigma_2$.
   \item[{\normalfont \eqref{class3:13}}:]\quad
     Consider the unique triangle $\Delta=(c_1,a_2,c_3)$ in the diagram of $L$;
     clearly, $|\Delta\cap{\mathscr D}_a|=1$, $|\Delta\cap{\mathscr D}_c|=2$.
     Since $F$ preserves $\Delta$, $\beta(a)=a$, $\beta(c)= c$, so $\beta = \id$.
     Therefore, $F(a_2)=a_2$.
     Consequently, $\alpha(2) = 2$, so $\alpha = \id,\sigma_2$.
 \end{sentences}
  In view of \ref{lem:autMTC:pom1} and \ref{prop:Lunique} this closes the proof.
 \end{proof}


\bigskip

\par\noindent\small
Authors' address:\\
Krzysztof Petelczyc, Ma{\l}gorzata Pra{\.z}mowska, Krzysztof Pra{\.z}mowski\\
Institute of Mathematics, University of Bia{\l}ystok\\
ul. Akademicka 2\\
15-246 Bia{\l}ystok, Poland\\
e-mail: {\ttfamily kryzpet@math.uwb.edu.pl}, {\ttfamily malgpraz@math.uwb.edu.pl},
{\ttfamily krzypraz@math.uwb.edu.pl}


\end{document}